\newtheorem{theorem}{Theorem}[section]
\newtheorem{proposition}[theorem]{Proposition}
\newtheorem{lemma}[theorem]{Lemma}
\newtheorem{remark}[theorem]{Remark}
\newtheorem{corollary}[theorem]{Corollary}
\newcommand{\calX}{{\mathcal X}}
\newcommand{\calM}{{\mathcal M}}
\newcommand{\calV}{{\mathcal V}}
\newcommand{\CP}{{\mathbb C}{\mathbb P}}
\newcommand{\Mlambda}{\calM_{\mathrm{Hod}}}
\newcommand{\MDH}{\calM_{\mathrm{DH}}}
\newcommand{\Mbun}{\calM_{r, {\mathcal O}_X}}
\newcommand{\Mbunhat}{\calM_{r, {\mathcal O}_X(x_0)}}
\newcommand{\Mbunbar}{\calM_{r, {\mathcal O}_{\overline{X}}}}
\newcommand{\Mconn}{\calM_{\mathrm{conn}}}
\newcommand{\Mrep}{\calM_{\mathrm{rep}}}
\newcommand{\MHiggs}{\calM_{\mathrm{Higgs}}}
\newcommand{\Treal}{T_{\mathbb{R}}}
\newcommand{\stab}{\mathrm{s}}
\newcommand{\smooth}{\mathrm{sm}}
\newcommand{\pr}{\mathrm{pr}}
\numberwithin{equation}{section}
\begin{document}
\baselineskip=15pt

\title[Deligne--Hitchin moduli space]{Torelli theorem for
the Deligne--Hitchin moduli space}

\author[I. Biswas]{Indranil Biswas}

\address{School of Mathematics, Tata Institute of Fundamental
Research, Homi Bhabha Road, Bombay 400005, India}

\email{indranil@math.tifr.res.in}

\author[T. L. G\'omez]{Tom\'as L. G\'omez}

\address{Instituto de Ciencias Matem\'aticas (CSIC-UAM-UC3M-UCM), 
Serrano 113bis, 28006 Madrid, Spain; and
Facultad de Ciencias Matem\'aticas, 
Universidad Complutense de Madrid, 28040 Madrid, Spain}

\email{tomas.gomez@mat.csic.es}

\author[N. Hoffmann]{Norbert Hoffmann}

\address{Freie Universit\"at Berlin, Institut f\"ur Mathematik, 
Arnimallee 3, 14195 Berlin, Germany;
Universit\"at G\"ottingen, Mathematisches Institut,
Bunsenstra{\ss}e 3-5, 37073 G\"ottingen, Germany}

\email{nhoffman@mi.fu-berlin.de; hoffmann@uni-math.gwdg.de}

\author[M. Logares]{Marina Logares}

\address{Departamento de Matematica Pura,
Facultade de Ciencias, Rua do Campo Alegre 687,
4169-007 Porto Portugal}

\email{mlogares@fc.up.pt}

\subjclass[2000]{14D20, 14C34}

\keywords{Deligne-Hitchin moduli, Higgs bundle, connection, Torelli}

\date{}

\begin{abstract}
Fix integers $g\, \geq\, 3$ and $r\, \geq\,2$, with
$r\, \geq\,3$ if $g\, =\,3$. Given a compact
connected Riemann surface $X$ of genus $g$, let
$\MDH(X)$ denote the corresponding
$\text{SL}(r, {\mathbb C})$ Deligne--Hitchin
moduli space. We prove that
the complex analytic space $\MDH(X)$
determines (up to an isomorphism) the unordered pair
$\{X, \overline{X}\}$, where $\overline{X}$ is the
Riemann surface defined by the opposite almost
complex structure on $X$.
\end{abstract}

\maketitle

\section{Introduction}

Let $X$ be a compact connected Riemann surface of genus
$g$, with $g\,\geq\, 2$. We denote by $X_{\mathbb{R}}$
the $C^\infty$ real manifold of dimension two underlying $X$.
Let $\overline{X}$ be the Riemann surface defined by the
almost complex structure $-J_X$ on $X_{\mathbb{R}}$;
here $J_X$ is the almost complex structure of $X$.

Fix an integer $r\,\geq\, 2$.
The main object of this paper is the
$\text{SL}(r, {\mathbb C})$ Deligne--Hitchin moduli space
\begin{equation*}
\MDH(X)\, =\, \MDH(X\, , \text{SL}(r, {\mathbb C}))
\end{equation*}
associated to $X$. This moduli space $\MDH(X)$
is a complex analytic space of complex dimension
$1+2(r^2-1)(g-1)$, which comes with a natural
surjective holomorphic map
\begin{equation*}
\MDH(X) \longrightarrow \CP^1 \,
=\, {\mathbb C}\cup\{\infty\}.
\end{equation*}
We briefly recall from \cite[page 7]{Si1} the
description of $\MDH(X)$ (in \cite{Si1}, the group
$\text{GL}(r, {\mathbb C})$ is considered instead of
$\text{SL}(r, {\mathbb C})$).
\begin{itemize}
 \item The fiber of $\MDH(X)$ over
$\lambda \,=\, 0 \,\in \,\mathbb{C}\,\subset\,\CP^1$
is the moduli space $\MHiggs(X)$ of semistable
$\text{SL}(r,{\mathbb C})$ Higgs bundles
$(E, \theta)$ over $X$ (see section
\ref{sec:Higgs} for details).
 \item The fiber of $\MDH(X)$ over any
 $\lambda \,\in\, \mathbb{C}^* \subset \CP^1$ is
 canonically biholomorphic to the moduli space
 $\Mconn(X)$ of holomorphic $\text{SL}(r,
 {\mathbb C})$ connections $(E, \nabla)$
 over $X$. In fact the restriction of
 $\MDH(X)$ to $\mathbb{C} \subset \CP^1$
 is the moduli space
 \begin{equation*}
 \Mlambda( X) \longrightarrow \mathbb{C}
 \end{equation*}
 of $\lambda$--connections over $X$ for the
 group $\text{SL}(r, {\mathbb C})$ (see
 section \ref{sec:lambda} for details).
 \item The fiber of $\MDH(X)$ over $\lambda \,=\,
 \infty \,\in\, \CP^1$ is the moduli space
 $\MHiggs(\overline{X})$ of semistable
 $\text{SL}(r,{\mathbb C})$ Higgs bundles
 over $\overline{X}$. Indeed, the complex analytic
 space $\MDH(X)$ is constructed by glueing
 $\Mlambda(X)$ to the analogous moduli space
 \begin{equation*}
 \Mlambda( \overline{X}) \longrightarrow \mathbb{C}
 \end{equation*}
 of $\lambda$--connections over $\overline{X}$. 
 One identifies the fiber of $\Mlambda(X)$
 over $\lambda \in \mathbb{C}^*$ with the fiber
 of $\Mlambda( \overline{X})$ over $1/\lambda \in
 \mathbb{C}^*$; the identification is done using the
fact that the holomorphic connections
 over both $X$ and $\overline{X}$ correspond
 to representations of $\pi_1(X_{\mathbb{R}})$ in
$\text{SL}(r, {\mathbb C})$
 (see section \ref{sec:MDH} for details).
\end{itemize}

This construction of $\MDH(X)$ is due to Deligne \cite{De}.
In \cite{Hi2}, Hitchin constructed the twistor space
for the hyper--K\"ahler structure of the moduli space
$\MHiggs(X)$; the complex analytic space $\MDH(X)$ is
identified with this twistor space (see \cite[page 8]{Si1}).

We note that while both $\Mlambda(X)$ and
$\Mlambda(\overline{X})$ are complex algebraic
varieties, the moduli space $\MDH(X)$
does not have any natural algebraic structure.

If we replace $X$ by $\overline{X}$, then the isomorphism
class of the Deligne--Hitchin moduli space clearly remains
unchanged. In fact, there is a canonical holomorphic isomorphism
of $\MDH(X)$ with $\MDH(\overline{X})$
over the automorphism of ${\mathbb C}{\mathbb P}^1$ defined
by $\lambda\, \longmapsto\, 1/\lambda$.

We prove the following theorem (see Theorem \ref{thm1}):
\begin{theorem}\label{thm0}
Assume that $g\,\geq\,3$, and if $g\,=\,3$, then assume that
$r\, \geq\, 3$.
The isomorphism class of the complex analytic space
$\MDH(X)$ determines uniquely the isomorphism
class of the unordered pair of Riemann
surfaces $\{X\, ,\overline{X}\}$.
\end{theorem}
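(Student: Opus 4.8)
The plan is to recover the two ``special'' fibers $\MHiggs(X)$ and $\MHiggs(\overline{X})$ of the projection $\MDH(X)\to\CP^1$ purely from the underlying complex analytic structure of $\MDH(X)$, as a canonically defined unordered pair of closed analytic subspaces, and then to invoke the Torelli theorem for the moduli space of Higgs bundles to read off $X$ and $\overline{X}$ from them.

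First I would isolate the special fibers by contrasting the two types of fiber. The fiber over any $\lambda\in\mathbb{C}^*$ is biholomorphic to $\Mconn(X)$, which by the Riemann--Hilbert correspondence is biholomorphic to the $\mathrm{SL}(r,\mathbb{C})$ character variety $\Mrep(X)$; the latter is an affine variety, hence Stein, and therefore contains no positive-dimensional compact analytic subvariety. By contrast, $\MHiggs(X)$ carries the proper Hitchin map, so every point lies on a component of its Hitchin fiber, a compact irreducible subvariety of dimension $(r^2-1)(g-1)\ge 2$. This suggests setting $S\subset\MDH(X)$ to be the union of all compact irreducible analytic subvarieties of dimension at least two; this subset is manifestly preserved by every biholomorphism of $\MDH(X)$.

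The key claim I would establish is that $S=\MHiggs(X)\sqcup\MHiggs(\overline{X})$. The inclusion $\supseteq$ follows from the Hitchin-fiber remark applied over both $\lambda=0$ and $\lambda=\infty$. For $\subseteq$, let $Z$ be a compact irreducible subvariety with $\dim Z\ge 2$, and compose with the projection $p\colon\MDH(X)\to\CP^1$ (which I am free to use here, since I am only computing the intrinsic set $S$ for the known space). Then $p(Z)$ is either a point or all of $\CP^1$; if $p(Z)=\CP^1$, the generic fiber of $p|_Z$ would be a positive-dimensional compact subvariety of a Stein fiber $\Mconn(X)$, which is impossible, so $Z$ lies in a single fiber, and as the fibers over $\mathbb{C}^*$ are Stein this fiber must be over $0$ or $\infty$. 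Hence $S$ is exactly the disjoint union of the two special fibers, and since each of $\MHiggs(X)$ and $\MHiggs(\overline{X})$ is connected, these are precisely the two connected components of $S$.

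It then remains to recover the curves. A biholomorphism $\MDH(X)\cong\MDH(X')$ carries $S$ to the analogous subset for $X'$, and so matches $\{\MHiggs(X),\MHiggs(\overline{X})\}$ with $\{\MHiggs(X'),\MHiggs(\overline{X'})\}$ as an unordered pair of complex analytic spaces; applying the Torelli theorem for the Higgs moduli space to each matched component (under the stated hypotheses on $g$ and $r$, which are exactly those required by that theorem) yields $\{X,\overline{X}\}\cong\{X',\overline{X'}\}$. I expect the main obstacle to be this first step, namely ruling out higher-dimensional compact subvarieties dominating $\CP^1$ and verifying that $S$ fills out the whole of each special fiber; a secondary point to address is that the Torelli input is algebraic whereas the isomorphism produced here is only analytic, so one must check that the complex analytic structure of $\MHiggs(X)$ determines its algebraic structure (equivalently, that the cited Higgs Torelli theorem applies to a biholomorphism).
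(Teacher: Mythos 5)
Your proposal is correct, and it takes a genuinely different route from the paper. The paper never isolates the two Higgs fibers as such: instead it characterizes the pair of subvarieties $\iota(\Mbun)$ and $\iota(\Mbunbar)$ directly inside $\MDH(X)$ as the only closed irreducible analytic subsets of dimension $(r^2-1)(g-1)$ whose smooth locus lies in $\MDH(X)^{\smooth}$ and on which the restricted tangent bundle has no nonzero holomorphic section; this uses a $\mathbb{C}^*$--action glued over the two charts, the fixed--point analysis of the nilpotent cone (Proposition \ref{fix_higgs}/\ref{fix_lambda}), and the whole of Section \ref{sec:lambda} (in particular the nonvanishing of the torsor class of $\pr_E$ computed via \cite{BR}) to verify that $\iota(\Mbun^{\stab})$ indeed has the no--vector--fields property inside $\Mlambda(X)^{\smooth}$. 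Your argument replaces all of that by a soft dichotomy: the fibers of $\pr$ over $\mathbb{C}^*$ are biholomorphic to the affine, hence Stein, character variety $\Mrep(X_{\mathbb{R}})$ and so contain no positive--dimensional compact analytic subsets, while the fibers over $0$ and $\infty$ are swept out by the compact fibers of the proper Hitchin map, all of whose components have dimension at least $(r^2-1)(g-1)\geq 2$; your threshold $\dim\geq 2$ is exactly right, since twistor lines are compact curves dominating $\CP^1$ and would ruin the statement for $\dim\geq 1$. This intrinsically recovers the unordered pair $\{\MHiggs(X),\MHiggs(\overline{X})\}$ as the connected components of your set $S$, after which Corollary \ref{torelli:Higgs} (which is stated for the complex analytic space, so your flagged analytic--versus--algebraic worry is already resolved there, with GAGA handling the final passage to \cite{KP}) finishes the proof. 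What your route buys is that Section \ref{sec:lambda} and the construction of the $\mathbb{C}^*$--vector field on $\MDH(X)$ become unnecessary for the main theorem, and it transparently explains why only the unordered pair $\{X,\overline{X}\}$ can be recovered; what it gives up is the intermediate Torelli theorem for $\Mlambda(X)$ (Corollary \ref{torelli:lambda}), which the paper obtains along the way. You still rely on the vector--field machinery of Section \ref{sec:Higgs} through Corollary \ref{torelli:Higgs}, so the dependence on Lemmas \ref{no_oneforms} and \ref{no_vectorfields} is not eliminated, only localized to the fiber over $0$.
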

In other words, if $\MDH(X)$ is
biholomorphic to the Deligne--Hitchin moduli
space $\MDH(Y)$ for another compact
connected Riemann surface $Y$, then either
$Y\, \cong\, X$ or $Y\, \cong \, \overline{X}$.

This paper is organized as follows.
Higgs bundles are dealt with in
Section \ref{sec:Higgs};
we also obtain a Torelli theorem for them
(see Corollary \ref{torelli:Higgs}).
The $\lambda$--connections are considered
in Section \ref{sec:lambda}, which also
contains a Torelli theorem for their moduli
space (see Corollary \ref{torelli:lambda}).
Finally, Section \ref{sec:MDH} deals with
the Deligne--Hitchin moduli space; here we
prove our main result.

\section{Higgs bundles}\label{sec:Higgs}

Let $X$ be a compact connected Riemann surface of genus
$g$, with $g\,\geq\, 3$. Fix an integer $r\,\geq\, 2$.
If $g\, =\,3$, then we assume that $r\,\geq\, 3$. Let
\begin{equation}\label{Mbun-def}
\Mbun
\end{equation}
be the moduli space
of semistable $\text{SL}(r,{\mathbb C})$--bundles on $X$.
So $\Mbun$ parameterizes all $S$--equivalence
classes of semistable vector bundles $E$ over $X$ of rank $r$
together with an isomorphism
$\bigwedge^r E\, \cong \,{\mathcal O}_X$. The moduli space
$\Mbun$ is known to be an irreducible normal
complex projective variety of dimension $(r^2-1)(g-1)$.
Let
\begin{equation}\label{Mbun-in}
\Mbun^{\stab}\, \subset\, \Mbun
\end{equation}
be the open subvariety parameterizing stable
$\text{SL}(r,{\mathbb C})$ bundles on $X$. This open subvariety
coincides with the smooth locus of $\Mbun$ according to
\cite[page 20, Theorem 1]{NR1}.

\begin{lemma} \label{no_oneforms}
 The holomorphic cotangent bundle
 \begin{equation*}
 T^* \Mbun^{\stab} \longrightarrow \Mbun^{\stab}
 \end{equation*}
 does not admit any nonzero holomorphic section.
\end{lemma}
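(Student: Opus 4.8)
The plan is to reinterpret a holomorphic section of $T^*\Mbun^{\stab}$ as a global holomorphic $1$-form on the smooth quasi-projective variety $\Mbun^{\stab}$, and to prove that the space $H^0(\Mbun^{\stab}, \Omega^1)$ of such forms vanishes. The argument rests on three features of the moduli space: the strictly semistable boundary is small, the singularities are mild, and the variety is unirational.

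First I would control the boundary $\Mbun \setminus \Mbun^{\stab}$, which by \cite{NR1} is exactly the singular locus of $\Mbun$. Under the standing hypotheses ($g \geq 3$, and $r \geq 3$ when $g = 3$), a direct estimate on the dimensions of the strata of strictly semistable bundles shows that this boundary has codimension at least $2$ in $\Mbun$; this is the point where the assumptions on $r$ and $g$ enter. Since $\Mbun$ is normal and $\Mbun^{\stab}$ is precisely its smooth locus, restriction then identifies $H^0(\Mbun^{\stab}, \Omega^1)$ with the space $H^0(\Mbun, \Omega^{[1]})$ of reflexive $1$-forms on $\Mbun$, because any holomorphic $1$-form on the smooth locus extends across a closed subset of codimension at least $2$.

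Next I would pass to a resolution of singularities $\pi \colon \widetilde{M} \to \Mbun$ that is an isomorphism over $\Mbun^{\stab}$. The moduli space $\Mbun$ has rational singularities, mild enough that reflexive $1$-forms pull back to genuine holomorphic $1$-forms on $\widetilde{M}$; equivalently, a form defined on $\Mbun^{\stab}$ acquires no poles along the exceptional divisor. Finally, $\Mbun$ is unirational, hence so is the smooth projective variety $\widetilde{M}$; since global holomorphic $1$-forms form a birational invariant of smooth projective varieties that vanishes for $\mathbb{P}^N$, and a dominant rational map $\mathbb{P}^N \dashrightarrow \widetilde{M}$ pulls nonzero $1$-forms back to nonzero ones, we obtain $H^0(\widetilde{M}, \Omega^1) = 0$. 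Chaining these identifications forces the original section to vanish.

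I expect the delicate step to be the extension across the singular locus: the codimension bound by itself yields extension only as a \emph{reflexive} form on $\Mbun$, and promoting this to an honest form on the resolution — that is, ruling out poles along the exceptional divisor — is what genuinely uses the rationality of the singularities of $\Mbun$. The codimension estimate and the unirationality are standard, but the former is indispensable and is exactly what dictates the hypotheses on $r$ and $g$.
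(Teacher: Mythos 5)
Your argument is essentially correct, but it takes a genuinely different route from the paper. The paper never has to deal with the singularities of $\Mbun$ at all: it fixes a point $x_0\in X$ and runs a Hecke correspondence $\Mbun^{\stab}\stackrel{q}{\longleftarrow}\mathcal P\stackrel{p}{\longrightarrow}\mathcal U\subseteq\Mbunhat$, where $\Mbunhat$ is the moduli space of stable bundles with determinant ${\mathcal O}_X(x_0)$. Since rank and degree are coprime there, $\Mbunhat$ is already smooth and projective, so unirationality kills its $1$--forms outright; Hartogs on the $(0,1)$--stable locus $\mathcal U$ (whose complement has codimension at least two --- this is where the hypotheses on $r$ and $g$ actually enter), the absence of relative forms on the $\mathbb P^{r-1}$--bundle $\mathcal P$, and the dominance of $q$ then finish the proof using only classical inputs. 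Your route stays on $\Mbun$ itself and therefore needs the extension step you flag: that a reflexive $1$--form on the normal variety $\Mbun$ pulls back without poles to a resolution. That statement is true --- $\Mbun$ has rational singularities by Boutot's theorem, being a GIT quotient of a smooth variety, and reflexive forms on spaces with rational (or klt) singularities do extend to resolutions by the theorems of Greb--Kebekus--Kov\'acs--Peternell and Kebekus--Schnell --- but this is far heavier machinery than anything the paper uses, and it postdates the paper. A cheaper way to close your gap is to note that the strictly semistable locus, which by [NR1] is exactly the singular locus, has codimension $2r_1r_2(g-1)-1\geq 3$ in $\Mbun$ for all $g\geq 3$, so Flenner's older extension theorem (reflexive $p$--forms extend to a resolution once $p\leq \operatorname{codim}(\mathrm{Sing})-2$) already applies with $p=1$. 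Two small inaccuracies: the identification of $H^0(\Mbun^{\stab},\Omega^1)$ with global reflexive $1$--forms on $\Mbun$ is just the definition of the reflexive hull on a normal variety and requires no separate codimension estimate (the real content sits entirely in the extension to the resolution); and the hypothesis $r\geq 3$ when $g=3$ is not what your codimension bound uses --- it is needed elsewhere in the paper, for instance for the codimension of the non--$(0,1)$--stable locus in $\Mbunhat$ and in the proof of Lemma \ref{no_vectorfields}.
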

\begin{proof}
Fix a point $x_0\, \in\, X$, and consider the Hecke correspondence
\begin{equation*}
 \Mbun^{\stab} \stackrel{q}{\longleftarrow} {\mathcal P}
 \stackrel{p}{\longrightarrow} {\mathcal U}\, \subseteq\, \Mbunhat
\end{equation*}
defined as follows:
\begin{itemize}
 \item $\Mbunhat$ denotes the moduli space of
 stable vector bundles $F$ over $X$ of rank $r$
 together with an isomorphism
 $\bigwedge^r F\, \cong \, {\mathcal O}_X(x_0)$.
 \item ${\mathcal U}\, \subseteq\, \Mbunhat$ denotes
 the locus of all $F$ for which every subbundle
  $F' \subset F$ with
  $0\, <\, \text{rank}(F') \, <\, r$ has
  negative degree; such vector bundles $F$ are called
  $(0\, ,1)$--\textit{stable} (see \cite[page 306,
  Definition 5.1]{NR2}, \cite[page 563]{BBGN}).
 \item $p: {\mathcal P} \longrightarrow {\mathcal U}$
  is the $\mathbb{P}^{r-1}$--bundle whose fiber over
  any vector bundle $F \,\in\, {\mathcal U}$ parameterizes
  all hyperplanes $H$ in the fiber $F_{x_0}$.
 \item $q: {\mathcal P} \, \longrightarrow\, \Mbun^{\stab}$
  sends any vector bundle $F \in {\mathcal U}$
  and hyperplane $H \subseteq F_{x_0}$ to the
  vector bundle $E$ given by the short exact sequence
  $$
  0\,\longrightarrow\, E\,\longrightarrow\, F
  \,\longrightarrow\, F_{x_0}/H \,\longrightarrow\, 0\,
  $$
  of coherent sheaves on $X$; here the quotient
  sheaf $F_{x_0}/H$ is supported at $x_0$.
\end{itemize}

As $\Mbunhat$ is a smooth unirational projective
variety (see \cite[page 53]{Se}),
it does not
admit any nonzero holomorphic $1$--form.
The subset ${\mathcal U}\, \subseteq\, \Mbunhat$
is open due to \cite[page 563, Lemma 2]{BBGN},
and the conditions on $r$ and $g$ ensure that
the codimension of the complement
$\Mbunhat \setminus {\mathcal U}$ is at least
two. Hence also
\begin{equation*}
  H^0( \mathcal{U}, T^* \mathcal{U}) \,=\, 0
\end{equation*}
due to Hartog's theorem.
Since $H^0( \mathbb{P}^{r-1},\, T^* \mathbb{P}^{r-1})
\,=\, 0$,
any relative
holomorphic $1$--form on the $\mathbb{P}^{r-1}$--bundle
$p: {\mathcal P}\, \longrightarrow\, \mathcal U$
vanishes identically. Thus we conclude that
\begin{equation*}
  H^0( \mathcal{P}, T^* \mathcal{P}) \,=\, 0\, .
\end{equation*}
The same follows for the variety $\Mbun^{\stab}$,
because the algebraic map $q: {\mathcal P}
\, \longrightarrow\, \Mbun^{\stab}$
is dominant.
\end{proof}

We denote by $K_X$ the canonical line bundle on $X$. Let
\begin{equation*}
\MHiggs(X)\, =\, \MHiggs(X\, , \text{SL}(r,{\mathbb C}))
\end{equation*}
denote the moduli space of semistable $\text{SL}(r,{\mathbb C})$
Higgs bundles over $X$. So $\MHiggs(X)$ parameterizes
all $S$--equivalence classes of semistable pairs
$(E\, ,\theta)$ consisting of a vector bundle $E$ over
$X$ of rank $r$ together with an isomorphism
$\bigwedge^r E\, \cong \,{\mathcal O}_X$, and a Higgs
field $\theta: E \longrightarrow E \otimes K_X$ with
$\text{trace}(\theta)\,=\, 0$. The moduli space $\MHiggs(X)$ is an
irreducible normal complex algebraic variety of dimension
$2(r^2-1)(g-1)$ according to \cite[page 70, Theorem 11.1]{Si3}.

There is a natural embedding
\begin{equation}\label{e3}
\iota\,:\, \Mbun\,\hookrightarrow\,\MHiggs(X)
\end{equation}
defined by $E\,\longmapsto\, (E\, ,0)$.
Let
\begin{equation*}
  \MHiggs^{\stab}(X) \,\subset\, \MHiggs(X)
\end{equation*}
be the Zariski
open locus of Higgs bundles $(E, \theta)$
whose underlying vector bundle $E$ is stable
(openness of $\MHiggs^{\stab}(X)$ follows from
\cite[page 635, Theorem 2.8(B)]{Ma}). Let
\begin{equation} \label{pr_Higgs}
  \pr_E\,:\, \MHiggs^{\stab}(X)
  \,\longrightarrow\, \Mbun^{\stab}
\end{equation}
be the forgetful map defined by
$(E, \theta) \longmapsto E$, where $\Mbun^{\stab}$
is defined in \eqref{Mbun-in}.
One has a canonical isomorphism
\begin{equation} \label{higgsiso}
  \MHiggs^{\stab}(X) \,\stackrel{\sim}{\longrightarrow}\,
  T^* \Mbun^{\stab}
\end{equation}
of varieties over $\Mbun^{\stab}$,
because holomorphic cotangent vectors to a point
$E \in \Mbun^{\stab}$ correspond, via
deformation theory and Serre duality,
to Higgs fields
$\theta: E \longrightarrow E \otimes K_X$ with
$\text{trace}(\theta)\,=\, 0$.
In particular, $\MHiggs^{\stab}(X)$ is
contained in the smooth locus
\begin{equation*}
  \MHiggs(X)^{\smooth} \,\subset\, \MHiggs(X)\, .
\end{equation*}

We recall that the \textit{Hitchin map}
\begin{equation}\label{e5}
H\, :\, \MHiggs(X)\,\longrightarrow\,
\bigoplus_{i=2}^r H^0(X,\, K^{\otimes i}_X)
\end{equation}
is defined by sending each Higgs bundle
$(E, \theta)$ to the characteristic polynomial
of $\theta$ \cite{Hi1}, \cite{Hi2}.

The multiplicative group ${\mathbb C}^*$ acts on
the moduli space $\MHiggs(X)$ as follows:
\begin{equation} \label{action}
t \cdot (E\, ,\theta)\, =\, (E\, ,t \theta)\, .
\end{equation}
On the other hand, ${\mathbb C}^*$ acts on
the Hitchin space $\bigoplus_{i=2}^r
H^0(X,\, K^{\otimes i}_X)$ as
\begin{equation}\label{ext}
t \cdot (v_2\, ,\cdots\, , v_i\, ,\cdots\, , v_r)\,=\, 
(t^2 v_2\, ,\cdots\, , t^i v_i\, ,\cdots\, , t^r v_r)\, ,
\end{equation}
where $v_i\, \in\, H^0(X,\, K^{\otimes i}_X)$ and
$i\, \in\, \{2, \ldots, r\}$. The Hitchin map $H$
in \eqref{e5} intertwines these two actions
of ${\mathbb C}^*$.
Note that there is no nonzero holomorphic function
on the Hitchin space which is homogeneous
of degree $1$ for this action (a function $f$ is
homogeneous of degree $d$ if $f(t \cdot (v_2,
\cdots, v_r))\,=\, t^{d} f((v_2,
\cdots, v_r))$), because all the exponents of $t$
in \eqref{ext} are at least two.

\begin{lemma} \label{no_vectorfields}
  The holomorphic tangent bundle
  \begin{equation*}
    T \Mbun^{\stab} \,\longrightarrow \,\Mbun^{\stab}
  \end{equation*}
  does not admit any nonzero holomorphic section.
\end{lemma}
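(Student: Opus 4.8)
The plan is to turn a holomorphic vector field on $\Mbun^{\stab}$ into a holomorphic function on the Higgs moduli space, and then to use the $\mathbb{C}^*$--action together with the Hitchin map to force that function to vanish. Let $V$ be a holomorphic section of $T\Mbun^{\stab}$. Using the canonical isomorphism \eqref{higgsiso}, that is $\MHiggs^{\stab}(X)\,\cong\,T^*\Mbun^{\stab}$, I would attach to $V$ the holomorphic function $f_V$ on $\MHiggs^{\stab}(X)$ whose value at a cotangent vector $\alpha$ is the pairing $\langle \alpha\, ,V\rangle$. This $f_V$ is linear along the fibers of $T^*\Mbun^{\stab}\to\Mbun^{\stab}$; since those fibers are precisely the spaces of traceless Higgs fields $\theta$ and the action \eqref{action} is $\theta\,\longmapsto\, t\theta$, the function $f_V$ is homogeneous of degree $1$ for this $\mathbb{C}^*$--action. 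Because $f_V$ vanishes identically if and only if $V$ does, it suffices to prove that $\MHiggs^{\stab}(X)$ admits no nonzero holomorphic function that is homogeneous of degree $1$.

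Next I would extend $f_V$ to all of $\MHiggs(X)$. The locus $\MHiggs^{\stab}(X)$ is a dense open subset (its dimension equals $2(r^2-1)(g-1)\,=\,\dim\MHiggs(X)$, and $\MHiggs(X)$ is irreducible), so since $\MHiggs(X)$ is normal, Hartog's theorem extends $f_V$ to a holomorphic function on $\MHiggs(X)$, provided the complement $\MHiggs(X)\setminus\MHiggs^{\stab}(X)$ has codimension at least two. The extended function is still homogeneous of degree $1$, by continuity on the dense subset. Invoking the properness of the Hitchin map $H$ of \eqref{e5}, whose fibers are compact and whose base $\bigoplus_{i=2}^r H^0(X,\,K_X^{\otimes i})$ is affine, every global holomorphic function on $\MHiggs(X)$ is constant on the fibers of $H$ and hence is the pullback of a holomorphic function on the Hitchin base. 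As $H$ intertwines the actions \eqref{action} and \eqref{ext}, the extension of $f_V$ descends to a degree--$1$ homogeneous function on the Hitchin base. But, as already observed following \eqref{ext}, all the weights there are at least two, so no nonzero holomorphic function on the base is homogeneous of degree $1$. Therefore $f_V\,=\,0$, and hence $V\,=\,0$.

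The step I expect to be the main obstacle is the codimension estimate for $\MHiggs(X)\setminus\MHiggs^{\stab}(X)$, namely bounding below the codimension of the locus of semistable Higgs bundles whose underlying bundle fails to be stable. Establishing codimension at least two calls for a Harder--Narasimhan stratification argument: for each destabilizing type one writes the underlying bundle as an extension, counts the Higgs fields that do not preserve the destabilizing subbundle, and checks that the hypotheses $g\,\geq\,3$, together with $r\,\geq\,3$ when $g\,=\,3$, make every stratum of codimension at least two, in the same spirit as the numerical conditions that controlled $\Mbunhat\setminus\mathcal{U}$ in Lemma \ref{no_oneforms}. The remaining ingredients, the fiberwise linearity yielding degree--$1$ homogeneity and the properness of the Hitchin map, are standard and present no difficulty.
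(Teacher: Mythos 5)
Your proposal follows essentially the same route as the paper's proof (which adapts Hitchin's argument): contract the vector field to a fiberwise-linear function on $T^* \Mbun^{\stab} \cong \MHiggs^{\stab}(X)$, extend over the normal variety $\MHiggs(X)$ using the codimension--two estimate, push down along the proper Hitchin map, and kill the resulting degree--$1$ homogeneous function because all weights in \eqref{ext} are at least $2$. The only points worth noting are that descending to the Hitchin base requires the fibers of $H$ to be \emph{connected} (compactness alone gives only local constancy on each fiber), which the paper states explicitly, and that the paper likewise asserts rather than proves the codimension bound you flag as the main obstacle.
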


\begin{proof}
  The proof of \cite[page 110, Theorem 6.2]{Hi1}
  carries over to this situation as follows.
  A holomorphic section $s$ of
  $T \Mbun^{\stab}$ provides (by
  contraction) a holomorphic function
  \begin{equation}\label{f}
  f\, :\, T^* \Mbun^{\stab}\,\longrightarrow\,\mathbb C
  \end{equation}
  on the total space of the cotangent bundle
  $T^*\Mbun^{\stab}$, which is linear on the fibers.
  Under the isomorphism in \eqref{higgsiso},
  it corresponds to a function on $\MHiggs^{\stab}( X)$.
The conditions on $g$ and $r$ imply that the complement of
  $\MHiggs^{\stab}( X)$ has codimension at least two
  in $\MHiggs(X)$. Since the latter is normal, the function
  $f$ in \eqref{f} extends to a holomorphic
  function
  \begin{equation*}
    \widetilde{f}\,:\, \MHiggs(X) \,\longrightarrow \,
\mathbb C\, ,
  \end{equation*}
  for example by \cite[page 90, Korollar 2]{Sc}.
  Since $f$ is linear on the fibers, we know that
$\widetilde{f}$ is homogeneous of degree $1$ for the action
\eqref{action} of $\mathbb{C}^*$.

  On the moduli space $\MHiggs(X)$, the Hitchin map
  \eqref{e5} is proper \cite[Theorem 6.1]{Ni},
  and also its fibers are connected. Therefore,
  the function $\widetilde{f}$ is constant
  on the fibers of the Hitchin map. Hence
  $\widetilde{f}$ comes from a holomorphic
  function on the Hitchin space, which is
  still homogeneous of degree $1$. We noted earlier
that there are no nonzero holomorphic
  functions on the Hitchin space which are
homogeneous of degree $1$. Therefore, $\widetilde{f} = 0$, and
consequently we have $f \,=\, 0$ and $s \,=\, 0$.
\end{proof}

\begin{corollary} \label{cor:Higgs}
  The restriction of the holomorphic tangent bundle
  \begin{equation*}
    T \MHiggs(X)^{\smooth}
    \,\longrightarrow\, \MHiggs(X)^{\smooth}
  \end{equation*}
  to $\iota( \Mbun^{\stab})\,\subset\,\MHiggs(X)^{\smooth}$
  does not admit any nonzero holomorphic section.
\end{corollary}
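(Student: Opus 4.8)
The plan is to reduce the statement directly to Lemmas \ref{no_vectorfields} and \ref{no_oneforms} by exploiting the identification of $\MHiggs^{\stab}(X)$ with the total space of a cotangent bundle. First I would note that the image $\iota(\Mbun^{\stab})$ lies inside the open locus $\MHiggs^{\stab}(X)\,\subseteq\,\MHiggs(X)^{\smooth}$, because $\iota$ sends a stable bundle $E$ to the Higgs bundle $(E,0)$, whose underlying vector bundle is stable. Since $\MHiggs^{\stab}(X)$ is open in $\MHiggs(X)^{\smooth}$, the restriction of $T\MHiggs(X)^{\smooth}$ to $\iota(\Mbun^{\stab})$ agrees with the restriction of the holomorphic tangent bundle $T\MHiggs^{\stab}(X)$ to that same subvariety.

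Next I would invoke the canonical isomorphism \eqref{higgsiso}, under which $\MHiggs^{\stab}(X)$ is the total space of $T^*\Mbun^{\stab}$ and $\iota(\Mbun^{\stab})$, being the locus $\theta\,=\,0$, is exactly the zero section. The structural input is the standard canonical splitting of the tangent bundle of the total space of a holomorphic vector bundle along its zero section: for $V\,\to\, M$ one has the canonical exact sequence $0\,\to\, V\,\to\, TV|_M\,\to\, TM\,\to\, 0$ along the zero section, whose inclusion of the zero section splits it, yielding $TV|_M\,\cong\, TM\oplus V$. Taking $V\,=\,T^*\Mbun^{\stab}$ and $M\,=\,\Mbun^{\stab}$ gives a canonical holomorphic isomorphism
\[
  T\MHiggs(X)^{\smooth}\big|_{\iota(\Mbun^{\stab})}
  \,\cong\, T\Mbun^{\stab}\,\oplus\, T^*\Mbun^{\stab}.
\]

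Finally, a holomorphic section of the left-hand bundle then amounts to a pair consisting of a holomorphic section of $T\Mbun^{\stab}$ and a holomorphic section of $T^*\Mbun^{\stab}$. By Lemma \ref{no_vectorfields} the first component vanishes, and by Lemma \ref{no_oneforms} the second component vanishes; hence the section is zero. I do not expect a substantial obstacle in this argument. The only point deserving care is to confirm that the splitting along the zero section is genuinely \emph{canonical and holomorphic}, and that the vertical summand is identified with $T^*\Mbun^{\stab}$ itself (not its dual); both follow from the intrinsic construction of the splitting via the projection $\pi$ and the differential of the zero-section inclusion, so that Serre duality enters only through \eqref{higgsiso} and not a second time.
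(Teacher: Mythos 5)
Your argument is correct and is essentially the paper's own proof: both reduce the statement to Lemma \ref{no_vectorfields} for the tangential directions and to Lemma \ref{no_oneforms} for the normal directions, after using the isomorphism \eqref{higgsiso} to identify the normal bundle of $\iota(\Mbun^{\stab})$ with $T^*\Mbun^{\stab}$. The only cosmetic difference is that you invoke the canonical holomorphic splitting $TV|_M \cong TM \oplus V$ along the zero section, whereas the paper just uses the normal-bundle exact sequence (project a section to the quotient, which vanishes, then the lift into the sub vanishes), which avoids having to justify the splitting at all.
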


\begin{proof}
  Using Lemma \ref{no_vectorfields}, it
  suffices to show that the normal bundle
  of the embedding
  \begin{equation*}
    \iota\,:\, \Mbun^{\stab} \,\hookrightarrow\,
    \MHiggs(X)^{\smooth} 
  \end{equation*}
  has no nonzero holomorphic sections. The isomorphism in
  \eqref{higgsiso} allows us to identify this normal
  bundle with $T^* \Mbun^{\stab}$. Now the assertion
  follows from Lemma \ref{no_oneforms}.
\end{proof}

The next step is to show that the above property
uniquely characterizes the subvariety
$\iota( \Mbun) \subset \MHiggs( X)$.
This will follow from the following proposition.

\begin{proposition} \label{fix_higgs}
  Let $Z$ be an irreducible component
  of the fixed point locus
  \begin{equation} \label{fixedlocus}
    \MHiggs( X)^{\mathbb{C}^*}\, \subseteq\, \MHiggs( X)\, .
  \end{equation}
  Then $\dim(Z) \,\leq\, (r^2-1)(g-1)$, with equality
  only for $Z \,=\, \iota( \Mbun)$.
\end{proposition}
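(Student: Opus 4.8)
The plan is to describe each component $Z$ of the fixed locus by the grading it induces, to read off the $\mathbb{C}^*$--weights on the tangent space of $\MHiggs(X)$ at a general point of $Z$, and to identify $\dim Z$ with the dimension of the weight--zero part, which the holomorphic symplectic form then bounds. Recall that a point of $\MHiggs(X)^{\mathbb{C}^*}$ is a Higgs bundle $(E,\theta)$ isomorphic to $(E,t\theta)$ for every $t$; linearising these isomorphisms yields a grading $E=\bigoplus_{j=1}^{k}E_j$ with $\theta(E_j)\subseteq E_{j+1}\otimes K_X$, so that $k=1$ is exactly the locus $\theta=0$, that is $\iota(\Mbun)$, while every other component has $k\ge 2$ generically. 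Conjugation by the grading operator turns $\mathrm{End}(E)=\bigoplus_m\mathrm{End}(E)_m$, with $\mathrm{End}(E)_m=\bigoplus_{j-i=m}\mathrm{Hom}(E_i,E_j)$, into a graded bundle and makes the deformation complex $\mathrm{End}^0(E)\xrightarrow{\mathrm{ad}\,\theta}\mathrm{End}^0(E)\otimes K_X$ equivariant. Combining the scaling of $\theta$ with this grading automorphism, a direct bookkeeping shows that $\mathrm{End}(E)_m$ in degree $0$ has weight $-m$ and $\mathrm{End}(E)_m\otimes K_X$ in degree $1$ has weight $1-m$; hence the tangent space $T=\mathbb{H}^1$ splits as $\bigoplus_{w\in\mathbb{Z}}V_w$, and the tangent space to the fixed locus is $V_0$.

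The decisive input is that the natural holomorphic symplectic form on $\MHiggs(X)$ is homogeneous of weight $1$ for the action: the Serre--duality pairing couples $\mathrm{End}(E)_m$ with $\mathrm{End}(E)_{-m}$, and the scaling of $\theta$ contributes the extra $+1$, so the form pairs $V_a$ non-degenerately with $V_{1-a}$. Therefore $\dim V_a=\dim V_{1-a}$, and since $w\mapsto 1-w$ interchanges the ranges $w\le 0$ and $w\ge 1$, splitting the total $\sum_w\dim V_w=2(r^2-1)(g-1)$ accordingly gives $\sum_{w\le 0}\dim V_w=(r^2-1)(g-1)$. In particular $\dim Z=\dim V_0\le(r^2-1)(g-1)$, with equality if and only if $V_w=0$ for all $w\le -1$.

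To rule out equality when $k\ge 2$ I would exhibit a nonzero tangent direction of negative weight by looking at the extreme weight $w=-(k-1)$. There the relevant subcomplex collapses to the single sheaf $\mathrm{Hom}(E_1,E_k)$ placed in degree $0$, so $V_{-(k-1)}=H^1(E_1^{\ast}\otimes E_k)$. The $\theta$--invariant subbundles $\bigoplus_{j\ge\ell}E_j$ force, via stability, the degree inequalities $\deg E_k<0<\deg E_1$; inserting them into Riemann--Roch gives $\dim H^1(E_1^{\ast}\otimes E_k)\ge (r_k\deg E_1-r_1\deg E_k)+r_1r_k(g-1)>0$. Hence $V_{-(k-1)}\neq 0$ and the bound is strict for $k\ge 2$. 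Since $\iota(\Mbun)$ is irreducible of dimension $(r^2-1)(g-1)$ and coincides with the locus $\theta=0$, it is the unique component attaining the bound, and all other components are strictly smaller.

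The step I expect to be the main obstacle is pinning down the weights and the weight--$1$ homogeneity of the symplectic form: one must combine the scaling on $\theta$ with the grading automorphism with the correct signs and verify equivariance of $\mathrm{ad}\,\theta$, so that the pairing $V_a\cong V_{1-a}^{\ast}$ comes out exactly. A secondary point is the identification $\dim Z=\dim V_0$, which uses smoothness of $\MHiggs(X)$ at a general point of $Z$; this holds whenever that point is a stable Higgs bundle, and in the remaining cases one still has $\dim Z\le\dim V_0$, which is all the argument needs.
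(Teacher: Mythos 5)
Your argument is essentially correct, but it takes a genuinely different route from the paper. The paper disposes of the proposition in a few lines by quoting two external results: the $\mathbb{C}^*$--equivariance of the Hitchin map places the fixed locus inside the nilpotent cone $H^{-1}(0)$; Laumon's theorem says every irreducible component of $H^{-1}(0)$ has dimension exactly $(r^2-1)(g-1)$; and Simpson's Lemma 11.9 in \cite{Si3} identifies $\iota(\Mbun)$ as the only component of $H^{-1}(0)$ lying in the fixed locus. You instead work infinitesimally at a fixed point: Simpson's description of fixed points as systems of Hodge bundles, the weight decomposition of $\mathbb{H}^1$ of the deformation complex, the weight--$1$ homogeneity of the holomorphic symplectic form giving $V_a\cong V_{1-a}^{\ast}$ and hence $\dim V_0\le\frac12\dim\MHiggs(X)$, and a Riemann--Roch computation showing $V_{-(k-1)}=H^1(\mathrm{Hom}(E_1,E_k))\neq 0$ when $k\ge 2$. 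This is self-contained where the paper is citation-driven (it is in effect a proof of the relevant special case of Laumon's and Simpson's statements), and it yields more information (the full weight spectrum at a fixed point); the price is that it only works cleanly at stable points.

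That price is the one real gap to flag: your argument needs the general point of a maximal-dimensional component $Z$ to be a \emph{stable} Higgs bundle. Stability enters twice --- first so that $\MHiggs(X)$ is smooth there with tangent space $\mathbb{H}^1$ of dimension $2(r^2-1)(g-1)$ and a nondegenerate symplectic pairing (at a strictly semistable point the moduli space may be singular, the pairing on the Zariski tangent space may degenerate, and your closing remark that ``one still has $\dim Z\le\dim V_0$'' does not by itself recover the bound $\dim V_0\le(r^2-1)(g-1)$); and second to get the \emph{strict} inequalities $\deg E_k<0<\deg E_1$ from the $\theta$--invariant filtration, which is what makes $H^1(\mathrm{Hom}(E_1,E_k))$ nonzero. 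This is reparable --- the strictly polystable part of the fixed locus is a union of images of products of lower-rank fixed loci, and an induction on rank shows it has dimension strictly less than $(r^2-1)(g-1)$, so a component attaining the bound must generically be stable --- but as written that step is missing. The paper avoids the issue entirely because Laumon's and Simpson's results are stated for the whole moduli space.
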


\begin{proof}
  The ${\mathbb{C}^*}$--equivariance
  of the Hitchin map $H$ in \eqref{e5} implies
  \begin{equation*}
    \MHiggs( X)^{\mathbb{C}^*} \,\subseteq\, H^{-1}( 0),
  \end{equation*}
  because $0$ is the only fixed point in the Hitchin
  space. We recall that $H^{-1}(0)$ is called the
  \textit{nilpotent cone}. The irreducible components
  of $H^{-1}(0)$ are parameterized by the conjugacy
  classes of the nilpotent elements in the Lie algebra
  $\text{sl}(r,{\mathbb C})$, and each irreducible
  component of $H^{-1}(0)$ is of dimension
  $(r^2-1)(g-1)$ \cite{La}.

  Thus $\dim(Z) \,\leq\, (r^2-1)(g-1)$, and if equality
  holds, then $Z$ is an irreducible component of
  the nilpotent cone $H^{-1}(0)$. A result due to
  Simpson, \cite[page 76, Lemma 11.9]{Si3}, implies that the
  only irreducible component of $H^{-1}( 0)$
  contained in the fixed point locus $\MHiggs( X)^{\mathbb{C}^*}$
  defined in \eqref{fixedlocus} is the image
  $\iota( \Mbun)$ of the embedding in \eqref{e3}.
\end{proof}

\begin{corollary} \label{torelli:Higgs}
  The isomorphism class of the complex analytic space
  $\MHiggs(X)$ determines uniquely the isomorphism
  class of the Riemann surface $X$, meaning if
$\MHiggs(X)$ is biholomorphic to
$\MHiggs(Y)$ for another compact connected Riemann
surface $Y$ of the same genus $g$, then $Y \,\cong\, X$.
\end{corollary}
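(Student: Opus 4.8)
The plan is to reconstruct the subvariety $\iota(\Mbun)\,\subset\,\MHiggs(X)$ from the underlying complex analytic structure alone, and then to feed the result into the Torelli theorem for the moduli space of bundles. Suppose $\phi\colon\MHiggs(X)\to\MHiggs(Y)$ is a biholomorphism, with $Y$ of genus $g$; write $d\,:=\,(r^2-1)(g-1)$, let $\iota_X\,:=\,\iota$ be the embedding \eqref{e3}, and let $\iota_Y\colon\calM_{r,\mathcal{O}_Y}\hookrightarrow\MHiggs(Y)$ be its analogue for $Y$. Since the hypotheses on $g$ and $r$ are symmetric in $X$ and $Y$, every result of this section applies verbatim to $\MHiggs(Y)$.

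The crux is that $\phi$ must carry $\iota_X(\Mbun)$ onto $\iota_Y(\calM_{r,\mathcal{O}_Y})$, and the difficulty is that $\phi$ is not assumed to commute with the $\mathbb{C}^*$--actions \eqref{action}, so the two fixed-point loci cannot be matched directly. I would get around this by transporting the generator of the action. Let $V_Y$ be the holomorphic vector field on $\MHiggs(Y)^{\smooth}$ that generates \eqref{action}; its zero locus is exactly $\MHiggs(Y)^{\mathbb{C}^*}\cap\MHiggs(Y)^{\smooth}$. As $\phi$ restricts to a biholomorphism of smooth loci, the pullback $V_X\,:=\,\phi^*V_Y$ is a holomorphic vector field on $\MHiggs(X)^{\smooth}$, and its restriction to $\iota_X(\Mbun^{\stab})$ is a holomorphic section of $T\MHiggs(X)^{\smooth}$ there. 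By Corollary \ref{cor:Higgs} this section vanishes identically. Hence $\phi$ maps $\iota_X(\Mbun^{\stab})$ into the zero locus of $V_Y$, and in particular into the fixed-point locus $\MHiggs(Y)^{\mathbb{C}^*}$.

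A dimension count then pins down the image. The set $\phi(\iota_X(\Mbun^{\stab}))$ is irreducible of dimension $d$ and lies in $\MHiggs(Y)^{\mathbb{C}^*}$, so its closure is contained in an irreducible component $Z$ of the fixed-point locus with $\dim Z\,\geq\, d$. Proposition \ref{fix_higgs}, applied to $Y$, gives $\dim Z\,\leq\, d$; hence $\dim Z\,=\,d$ and $Z\,=\,\iota_Y(\calM_{r,\mathcal{O}_Y})$. Since $\Mbun^{\stab}$ is dense in $\Mbun$ and $\phi$ is a homeomorphism, passing to closures yields $\phi(\iota_X(\Mbun))\,=\,\iota_Y(\calM_{r,\mathcal{O}_Y})$. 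Thus $\phi$ restricts to a biholomorphism $\iota_X(\Mbun)\,\stackrel{\sim}{\longrightarrow}\,\iota_Y(\calM_{r,\mathcal{O}_Y})$, i.e.\ a biholomorphism between the moduli spaces of semistable bundles over $X$ and over $Y$; these being projective, it is an isomorphism of varieties by GAGA.

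It then suffices to invoke the Torelli theorem for the moduli space of semistable $\text{SL}(r,\mathbb{C})$--bundles, by which the isomorphism class of $\Mbun$ recovers $X$; this gives $Y\,\cong\, X$. I expect the main obstacle to be precisely the non-equivariance of $\phi$ addressed in the second paragraph: absent any control over how $\phi$ moves $\mathbb{C}^*$--orbits, the only available leverage is the vanishing provided by Corollary \ref{cor:Higgs}, which forces $\phi$ to respect the single distinguished fixed component $\iota(\Mbun)$ while saying nothing about the rest of the action. Everything else is a routine comparison of dimensions and closures, together with the cited Torelli statement for $\Mbun$.
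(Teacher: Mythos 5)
Your proposal is correct and follows essentially the same route as the paper: the paper characterizes $\iota(\Mbun)$ intrinsically as the unique closed analytic subset of dimension $(r^2-1)(g-1)$ on which the restricted tangent bundle has no nonzero holomorphic sections (using Corollary \ref{cor:Higgs} to verify this, the vanishing of the $\mathbb{C}^*$--vector field to place such a subset in the fixed locus, and Proposition \ref{fix_higgs} to identify it), and then invokes Kouvidakis--Pantev. Your transport of the generating vector field through $\phi$ is just the unfolded form of that same argument, with the added (harmless) care of the GAGA step before citing \cite{KP}.
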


\begin{proof}
Let $Z \,\subset\, \MHiggs(X)$ be a closed analytic
subset with the following three properties:
\begin{itemize}
 \item $Z$ is irreducible and has
  complex dimension $(r^2-1)(g-1)$.
 \item The smooth locus $Z^{\smooth} \,\subseteq\, Z$
  lies in the smooth locus
  $\MHiggs( X)^{\smooth} \subset \MHiggs( X)$.
 \item The restriction of the holomorphic tangent
  bundle $T \MHiggs( X)^{\smooth}$ to the subspace
  $Z^{\smooth} \,\subset\, \MHiggs( X)^{\smooth}$
  has no nonzero holomorphic sections.
\end{itemize}
By Corollary \ref{cor:Higgs}, the image
$\iota( \Mbun)$ of the embedding $\iota$
in \eqref{e3} has these properties.

The action \eqref{action} of $\mathbb{C}^*$
on $\MHiggs( X)$ defines a holomorphic vector field
\begin{equation*}
  \MHiggs(X)^{\smooth} \longrightarrow
  T \MHiggs(X)^{\smooth}.
\end{equation*}
The third assumption on $Z$ says that any holomorphic vector
field on $\MHiggs(X)^{\smooth}$ vanishes on $Z^{\smooth}$.
Therefore, it follows that the stabilizer of each point
in $Z^{\smooth} \,\subset\, \MHiggs( X)$ has
nontrivial tangent space at $1 \in \mathbb{C}^*$, and hence
the stabilizer must be the full group $\mathbb{C}^*$.

This shows that the fixed point locus
$\MHiggs( X)^{\mathbb{C}^*} \,\subseteq\, \MHiggs( X)$
contains $Z^{\smooth}$, and hence also
contains its closure $Z$ in $\MHiggs( X)$.
Due to Proposition \ref{fix_higgs}, this can
only happen for $Z \,=\, \iota( \Mbun)$.
In particular, we have $Z \,\cong\, \Mbun$.

We have just shown that the isomorphism class of
$\MHiggs(X)$ determines the isomorphism
class of $\Mbun$. The latter determines the
isomorphism class of $X$ due to a theorem of
Kouvidakis and Pantev \cite[page 229, Theorem E]{KP}.
\end{proof}

\begin{remark} \upshape
In \cite{BG}, an analogous Torelli theorem is
proved for Higgs bundles $(E\, , \theta)$ such that
the rank and the degree of the underlying
vector bundle $E$ are coprime.
\end{remark}

\section{The $\lambda$--connections} \label{sec:lambda}

In this section, we consider vector bundles
with connections, and more generally with
$\lambda$--connections in the sense of
\cite[page 87]{Si2} and \cite[page 4]{Si1}.
We denote by
\begin{equation*}
  \Mlambda(X)\, =\, \Mlambda(X\, , \text{SL}(r, {\mathbb C}))
\end{equation*}
the moduli space of triples of the form
$(\lambda\, , E\, ,\nabla)$, where
$\lambda$ is a complex number, and $(E\, ,\nabla)$
is a $\lambda$--connection on $X$ for the group
$\text{SL}(r, {\mathbb C})$. We recall that given
any $\lambda\, \in\, \mathbb C$, a $\lambda$--\textit{connection}
on $X$ for the group
$\text{SL}(r, {\mathbb C})$ is a pair $(E\, ,\nabla)$, where
\begin{itemize}
\item $E\, \longrightarrow\, X$ is a holomorphic vector bundle
of rank $r$ together with an isomorphism
$\bigwedge^r E\, \cong \, {\mathcal O}_X$.

\item $\nabla\, :\, E\, \longrightarrow\, E\otimes K_X$
is a $\mathbb C$--linear homomorphism of sheaves
satisfying the following two conditions:
\begin{enumerate}
\item If $f$ is a locally defined holomorphic function
on $\mathcal{O}_X$ and $s$ is a locally defined
holomorphic section of $E$, then
$$
\nabla(fs) \, =\, f\cdot \nabla(s) + \lambda \cdot s\otimes df\, .
$$
\item The operator $\bigwedge^rE\, \longrightarrow\, 
(\bigwedge^rE)\otimes K_X$ induced by $\nabla$
coincides with $\lambda \cdot d$.
\end{enumerate}
\end{itemize}

The moduli space $\Mlambda(X)$ is a complex algebraic
variety of dimension $1+2(r^2-1)(g-1)$. It is equipped with
a surjective algebraic morphism
\begin{equation} \label{pr_lambda}
\pr_{\lambda}\,:\, \Mlambda(X)\, \longrightarrow\, \mathbb C
\end{equation}
defined by $(\lambda, E, \nabla) \,\longmapsto\, \lambda$.

A $0$--connection is a Higgs bundle, so
\begin{equation*}
  \MHiggs( X) \,=\, \pr_{\lambda}^{-1}( 0) \subset \Mlambda(X)
\end{equation*}
is the moduli space of Higgs bundles considered
in the previous section. In particular, the
embedding \eqref{e3} of $\Mbun$ into $\MHiggs( X)$
also gives an embedding of $\Mbun$ into $\Mlambda(X)$.
Slightly abusing notation, we denote
this embedding again by
\begin{equation} \label{iota}
\iota\,:\, \Mbun\,\hookrightarrow\,\Mlambda(X)\, .
\end{equation}
It maps the stable locus $$\Mbun^{\stab} \,\subset\,
\Mbun$$ into the smooth locus
\begin{equation}\label{hsl}
\Mlambda(X)^{\smooth}\,\subset\, \Mlambda(X)\, .
\end{equation}

We let $\mathbb{C}^*$ act on $\Mlambda( X)$ as
\begin{equation} \label{extended_action}
t \cdot (\lambda, E, \nabla) \,=\,
(t \cdot \lambda, E, t \cdot \nabla)\, .
\end{equation}
This extends the $\mathbb{C}^*$ action on $\MHiggs( X)$
introduced above in formula \eqref{action}.

\begin{proposition} \label{fix_lambda}
  Let $Z$ be an irreducible component
  of the fixed point locus
  \begin{equation*}
    \Mlambda( X)^{\mathbb{C}^*} \,\subseteq\, \Mlambda( X).
  \end{equation*}
  Then $\dim(Z) \,\leq\, (r^2-1)(g-1)$, with equality
  only for $Z \,=\, \iota( \Mbun)$.
\end{proposition}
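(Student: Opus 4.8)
The plan is to mimic the proof of Proposition \ref{fix_higgs}, but to account for the extra dimension coming from the $\lambda$--direction. A fixed point of the $\mathbb{C}^*$--action \eqref{extended_action} is a triple $(\lambda, E, \nabla)$ whose isomorphism class is invariant under $t \cdot (\lambda, E, \nabla) = (t\lambda, E, t\nabla)$. First I would observe that, since the first coordinate transforms as $\lambda \mapsto t\lambda$, any fixed triple must have $\lambda = 0$: otherwise the orbit would move in the $\lambda$--direction and could not be fixed. Therefore
\begin{equation*}
  \Mlambda(X)^{\mathbb{C}^*} \,\subseteq\, \pr_{\lambda}^{-1}(0) \,=\, \MHiggs(X),
\end{equation*}
and on this fiber the action \eqref{extended_action} restricts to exactly the Higgs action \eqref{action}. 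Consequently
\begin{equation*}
  \Mlambda(X)^{\mathbb{C}^*} \,=\, \MHiggs(X)^{\mathbb{C}^*}.
\end{equation*}

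Given this identification, the statement reduces immediately to Proposition \ref{fix_higgs}. Indeed, any irreducible component $Z$ of $\Mlambda(X)^{\mathbb{C}^*}$ is an irreducible component of $\MHiggs(X)^{\mathbb{C}^*}$, so by Proposition \ref{fix_higgs} we have $\dim(Z) \leq (r^2-1)(g-1)$, with equality only for $Z = \iota(\Mbun)$. This also uses the compatibility of the two embeddings of $\Mbun$ noted around \eqref{iota}, namely that the image $\iota(\Mbun) \subset \MHiggs(X) \subset \Mlambda(X)$ is the same subvariety computed either way.

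The one point deserving care, and the main obstacle, is the very first claim: that a fixed point of \eqref{extended_action} must satisfy $\lambda = 0$. While this is intuitively clear from the transformation $\lambda \mapsto t\lambda$, one should verify it at the level of isomorphism classes of triples rather than naive coordinates, since the moduli space parameterizes $S$--equivalence classes and $\mathbb{C}^*$--fixedness means the orbit is constant only up to isomorphism. However, the map $\pr_{\lambda}$ of \eqref{pr_lambda} is $\mathbb{C}^*$--equivariant for the standard scaling action on the target $\mathbb{C}$, and the only fixed point of that scaling is $0 \in \mathbb{C}$; hence $\pr_{\lambda}(Z)$ is a $\mathbb{C}^*$--fixed point of $\mathbb{C}$, forcing $\pr_{\lambda}(Z) = \{0\}$ and $Z \subseteq \MHiggs(X)$. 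This is the same mechanism used in Proposition \ref{fix_higgs} to place the fixed locus inside the nilpotent cone, applied here one level up.
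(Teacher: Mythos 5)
Your proposal is correct and follows the same route as the paper: the paper's proof likewise notes that a point $(\lambda,E,\nabla)$ can only be $\mathbb{C}^*$--fixed if $\lambda=0$, so $Z\subseteq\MHiggs(X)$, and then invokes Proposition \ref{fix_higgs}. Your extra justification via the $\mathbb{C}^*$--equivariance of $\pr_\lambda$ is a sound way to make the first step precise at the level of moduli points.
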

\begin{proof}
  A point $(\lambda, E, \nabla) \,\in\, \Mlambda( X)$
  can only be fixed by $\mathbb{C}^*$ if $\lambda \,=\, 0$.
  Hence $Z$ is automatically contained in $\MHiggs(X)$.
  Now the claim follows from Proposition \ref{fix_higgs}.
\end{proof}

A $1$--connection is a
holomorphic connection in the usual sense, so
\begin{equation} \label{Mconn}
  \Mconn(X)\,:=\, \pr_{\lambda}^{-1}( 1)\,\subset\, \Mlambda( X)
\end{equation}
is the moduli space of $\text{SL}(r, {\mathbb C})$
holomorphic connections $(E, \nabla)$ over $X$. We denote by
\begin{equation*}
  \Mconn^{\stab}( X) \,\subset\, \Mconn(X)
  \qquad\text{and}\qquad
  \Mlambda^{\stab}( X) \,\subset\, \Mlambda(X)
\end{equation*}
the Zariski open subvarieties where the underlying
vector bundle $E$ is stable (openness follows from
\cite[page 635, Theorem 2.8(B)]{Ma}).

\begin{proposition} \label{no_connection}
The forgetful map
\begin{equation}\label{psi}
  \pr_E\,:\, \Mconn^{\stab}(X)\, \longrightarrow\,
  \Mbun^{\stab}
\end{equation}
defined by $(E\, , \nabla) \longmapsto E$
admits no holomorphic section.
\end{proposition}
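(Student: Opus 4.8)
The plan is to exploit the affine-bundle structure of the forgetful map \eqref{psi} and to reduce the non-existence of a section to the non-vanishing of a single cohomology class, which will turn out to be the Atiyah class of an ample line bundle.

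First I would record the fibrewise structure of \eqref{psi}. If $(E,\nabla_1)$ and $(E,\nabla_2)$ are two holomorphic $\text{SL}(r,\mathbb C)$--connections on the same stable bundle $E$, then $\nabla_1-\nabla_2$ is $\mathcal O_X$--linear and trace--free, hence a Higgs field on $E$; via the identification \eqref{higgsiso} this exhibits the fibre $\pr_E^{-1}(E)$ as a torsor under $T^*_E\Mbun^{\stab}$. Thus $\pr_E\colon\Mconn^{\stab}(X)\to\Mbun^{\stab}$ is a holomorphic torsor under the cotangent bundle $T^*\Mbun^{\stab}$, classified by a class $c\in H^1(\Mbun^{\stab},\Omega^1_{\Mbun^{\stab}})$, and a section exists if and only if $c=0$. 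I would also note, using Lemma \ref{no_oneforms}, that the set of sections is either empty or a single point, because two sections would differ by a global holomorphic $1$--form on $\Mbun^{\stab}$, of which there are none. Hence it remains only to prove $c\neq 0$.

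Next I would identify $c$ geometrically. A holomorphic section of $\pr_E$ is exactly a rule assigning, holomorphically in the moduli parameter, a holomorphic connection to every stable $E$, i.e. a holomorphic connection along $X$ on the universal family over $X\times\Mbun^{\stab}$ (taken on the trace-free endomorphism bundle, which descends even when no universal vector bundle exists). Its obstruction is the relevant component of the Atiyah class of that family, and this is the standard description of the moduli of connections as a twisted cotangent bundle: up to a nonzero scalar, $c$ equals the Atiyah class of the ample determinant--of--cohomology line bundle $\Theta$ on $\Mbun$, restricted to the smooth locus $\Mbun^{\stab}$. I would then deduce $c\neq 0$: under the natural map $H^1(\Omega^1)\to H^2(-,\mathbb C)$ the Atiyah class of $\Theta$ maps to (a nonzero multiple of) $c_1(\Theta)$, which is nonzero since $\Theta$ is ample and $\Mbun^{\stab}$ contains complete curves on which $\Theta$ has positive degree. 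Since $\Mbun^{\stab}$ is the smooth locus of the normal projective variety $\Mbun$ with complement of codimension at least two, this detection of $c$ by $c_1(\Theta)$ is unaffected by passing from $\Mbun$ to $\Mbun^{\stab}$. Therefore $c\neq 0$, and no section exists.

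The hard part is the middle step. The obstruction $c$ lives in $H^1(\Omega^1)$ and is invisible to holomorphic tensors: every holomorphic $1$--form vanishes by Lemma \ref{no_oneforms}, and every holomorphic $2$--form vanishes because $\Mbun$ is unirational, so a would-be section is automatically an isotropic (indeed Lagrangian) slice and the natural soft devices — pulling back the canonical symplectic form, or spreading the section by the $\mathbb C^*$--action of \eqref{extended_action} to a product trivialization of $\Mlambda^{\stab}(X)$ over $\mathbb C\times\Mbun^{\stab}$ — are internally consistent and do \emph{not} by themselves produce a contradiction. The genuine content, and the step needing care, is thus the identification of $c$ with the Atiyah class of an ample bundle together with the proof that this class is nonzero in $H^1(\Mbun^{\stab},\Omega^1_{\Mbun^{\stab}})$.
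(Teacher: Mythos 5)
Your proposal is correct in outline and follows the same overall strategy as the paper: exhibit \eqref{psi} as a holomorphic torsor under $T^* \Mbun^{\stab}$, observe that a holomorphic section exists if and only if the classifying class in $H^1( \Mbun^{\stab}, T^* \Mbun^{\stab})$ vanishes, and show that this class is a nonzero multiple of $c_1( \Theta)$ for the ample generator $\Theta$ of the Picard group. Where you genuinely diverge is in the middle step, which you rightly single out as carrying all the weight. The paper computes the class concretely: the Narasimhan--Seshadri theorem supplies a canonical $C^\infty$ section of $\pr_E$ (the unitary connection), a general Dolbeault formalism attaches to any $C^\infty$ section of a torsor under a vector bundle $\calV$ a class $[\overline{\partial} s] \in H^1( \calM, \calV)$, and the explicit curvature computation of \cite[Theorem 2.11]{BR} identifies $[\overline{\partial} s]$ with a nonzero multiple of $c_1( \Theta)$. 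You instead invoke the description of $\Mconn^{\stab}(X)$ as the twisted cotangent bundle associated to $\Theta$, so that the obstruction becomes the Atiyah class of $\Theta$, and you detect its nonvanishing by pairing with complete curves in $\Mbun^{\stab}$ (legitimate, since the unstable locus has codimension at least two, and your appeal to the vanishing of holomorphic $2$--forms is the right way to pass from the Dolbeault class to a topological $c_1$ on this non-compact variety). Each route buys something: yours is purely algebraic and avoids the unitary connection; the paper's produces an explicit $(0,1)$--form representative and a precise citation for the computation. The one point you would still need to substantiate is the identification ``$c$ equals the Atiyah class of $\Theta$ up to a nonzero scalar,'' which you assert as standard but do not prove --- that assertion is exactly the content the paper outsources to \cite{BR}.
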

\begin{proof}
This map $\pr_E$ is surjective,
because  a criterion due to Atiyah and Weil
implies that every stable vector bundle $E$
on $X$ of degree zero admits a holomorphic connection.
In fact, $E$ admits a unique unitary
holomorphic connection according to a theorem of
Narasimhan and Seshadri \cite{NS};
this defines a canonical $C^{\infty}$ section
\begin{equation} \label{section}
  \Mbun^{\stab} \,\longrightarrow \,\Mconn^{\stab}(X)
\end{equation}
of the map $\pr_E$. Since any two
holomorphic $\text{SL}(r, {\mathbb C})$--connections
on $E$ differ by a Higgs field
$\theta\,:\, E \,\longrightarrow E\, \otimes K_X$
with $\mathrm{trace}( \theta) \,=\, 0$,
the map $\pr_E$ in \eqref{psi} is a
holomorphic torsor under the holomorphic
cotangent bundle
$T^* \Mbun^{\stab}\,\longrightarrow\, \Mbun^{\stab}$.

  Given a complex manifold $\calM$, we denote by
  $\Treal \calM$ the tangent bundle of the
  underlying real manifold $\calM_{\mathbb{R}}$,
  and by
  \begin{equation*}
    J_{\calM}\,:\, \Treal \calM
    \,\longrightarrow\, \Treal \calM
  \end{equation*}
  the almost complex structure of $\calM$. Let
  \begin{equation} \label{torsor}
    \varpi\,:\, \calX\, \longrightarrow \,\calM
  \end{equation}
  be a holomorphic torsor under a holomorphic
  vector bundle $\calV \longrightarrow \calM$.
  To each $C^\infty$ section
  $s\,:\, \calM \,\longrightarrow\, \calX$ of $\varpi$,
  we can associate a $(0, 1)$--form
  \begin{equation*}
    \overline{\partial} s \,\in\, C^\infty( \calM,\,
    \Omega^{0, 1} \calM \otimes \calV)
  \end{equation*}
 in the following way. The vector bundle homomorphism
  \begin{equation*}
    \widetilde{ds} \,:=\, ds + J_{\calX} \circ ds
    \circ J_{\calM}\,:\, \Treal \calM \,\longrightarrow\,
    s^* \Treal \calX
  \end{equation*}
  satisfies the identity
  \begin{equation} \label{type01}
    J_{\calX} \circ \widetilde{ds} +
    \widetilde{ds} \circ J_{\calM} \,=\,
    J_{\calX} \circ ds - ds \circ J_{\calM}
    - J_{\calX} \circ ds + ds \circ J_{\calM}
    \,=\, 0\, ,
  \end{equation}
  and, since $\varpi$ is holomorphic, we also have
  \begin{equation} \label{vertical}
    d \varpi \circ \widetilde{ds} \,=\, d \varpi \circ ds
    + J_{\calM} \circ d \varpi \circ ds \circ J_{\calM}
    \,=\, \mathrm{id} - \mathrm{id} \,=\, 0\, .
  \end{equation}

  The equation in \eqref{vertical} means that
  $\widetilde{ds}$ maps
  into the subbundle of vertical tangent
  vectors in $s^* \Treal \calX$, which
  is canonically isomorphic to $\calV_{\mathbb{R}}$ (the
real vector bundle underlying the complex vector bundle
$\calV$).
  Thus we can consider $\widetilde{ds}$ as a real
  $1$--form
  \begin{equation*}
    \widetilde{ds} \,\in\, C^\infty(
    \calM, \, \Treal^* \calM \otimes \calV_{\mathbb{R}})\, .
  \end{equation*}

Identify $\Treal \calM$ with $T^{0,1}{\calM}$ using the
$\mathbb R$--linear isomorphism defined by
$$
v\, \longmapsto\, v- \sqrt{-1}{\cdot}J_{\calM}(v)\, ,
$$
and also identify $\calV_{\mathbb{R}}$ with $\calV$
using the identity map. From \eqref{type01} it
follows that $\widetilde{ds}$ is actually a
$\mathbb C$--linear
homomorphism from $T^{0,1}{\calM}$ to $\calV$ in terms
of these identifications. Let
\begin{equation*}
    \overline{\partial} s \,\in\, C^\infty(
    \calM,\, \Omega^{0, 1}_{\calM} \otimes \calV)\, .
  \end{equation*}
be the $(0\, ,1)$--form with values in $\calV$ defined
by $\widetilde{ds}$. From the construction of
$\overline{\partial} s$ it is clear that
\begin{itemize}
\item $\overline{\partial} s$ vanishes if and
only if $s$ is holomorphic, and

\item $\overline{\partial} s$ is $\overline{\partial}$--closed.
\end{itemize}
Therefore, $\overline{\partial} s$ defines a
Dolbeault cohomology class
\begin{equation}\label{dcc}
    [\varpi] \,:=\,
    [\overline{\partial} s] \,\in\,
    H_{\overline{\partial}}^{0, 1}( \calM,\, \calV) 
    \,\cong\, H^1( \calM,\, \calV)\, .
\end{equation}

Since $\calV$ acts on $\varpi\,:\, \calX\,\longrightarrow\,
\calM$, each section $v \,\in\, C^{\infty}( \calM, \,\calV)$
  acts on the sections of $\varpi$; we denote this
  action by $s \,\longmapsto\, v + s$.
  The above construction implies that
  \begin{equation}\label{if}
    \overline{\partial} (v + s)
    \,=\, \overline{\partial} v
    + \overline{\partial} s\, .
  \end{equation}
Consequently, the Dolbeault cohomology class $[\varpi]$
in \eqref{dcc} does not depend on the choice of the
$C^{\infty}$ section $s$. From \eqref{if} it also follows that 
$[\varpi]$ vanishes if and only if the torsor $\varpi$
in \eqref{torsor} admits a holomorphic section.

  We now take $\varpi$ to be the torsor $\pr_E$
in \eqref{psi} under the cotangent bundle $T^* \Mbun^{\stab}$,
  and we take $s$ to be the $C^\infty$ section in
  \eqref{section}. For this case, the class
  \begin{equation} \label{torsorclass}
    [\overline{\partial} s] \,\in\,
    H^1( \Mbun^{\stab}, \,T^* \Mbun^{\stab})
  \end{equation}
  has been computed in
  \cite[page 308, Theorem 2.11]{BR}; the result is
  that it is a nonzero multiple of $c_1(\Theta)$,
  where $\Theta$ is the ample generator of
  $\text{Pic}(\Mbun^{\stab})$. In particular,
  the cohomology class \eqref{torsorclass}
  of the torsor $\pr_E$ in question is nonzero.
Therefore, $\pr_E$ does not admit any holomorphic section.
\end{proof}

We note that the forgetful map $\pr_E$ defined in
Proposition \ref{no_connection} extends
canonically from $\Mconn^{\stab}(X)$ to
$\Mlambda^{\stab}(X)$. Slightly abusing
notation, we denote this extended map
again by
\begin{equation*}
  \pr_E\, :\, \Mlambda^{\stab}(X)\, \longrightarrow\,
  \Mbun^{\stab}\, .
\end{equation*}
This map
is defined by $(\lambda, E, \nabla) \,\longmapsto\, E$,
and it also extends the map $\pr_E$ in \eqref{pr_Higgs}.

\begin{corollary} \label{no_lambdaconnection}
  The only holomorphic map
  \begin{equation*}
    s\,:\, \Mbun^{\stab} \,\longrightarrow \,\Mlambda^{\stab}(X)
  \end{equation*}
  with $\pr_E \circ s \,=\, \mathrm{id}$ is the restriction
  \begin{equation*}
  \iota\,:\, \Mbun^{\stab}\,\hookrightarrow\,\Mlambda^{\stab}(X)
  \end{equation*}
  of the embedding $\iota$ defined in \eqref{iota}.
\end{corollary}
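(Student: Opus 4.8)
The plan is to first show that $\pr_\lambda \circ s$ is constant on $\Mbun^{\stab}$, and then to treat separately the cases where this constant vanishes or not. Given a holomorphic section $s$ of $\pr_E$, the composite
\begin{equation*}
  \pr_\lambda \circ s\,:\, \Mbun^{\stab} \,\longrightarrow\, \mathbb{C}
\end{equation*}
is a holomorphic function. Since $\Mbun$ is normal and $\Mbun^{\stab}$ is its smooth locus by \cite[page 20, Theorem 1]{NR1}, the complement $\Mbun \setminus \Mbun^{\stab}$ has complex codimension at least two. Hence $\pr_\lambda \circ s$ extends, by Hartog's theorem, to a holomorphic function on the irreducible projective variety $\Mbun$, and is therefore a constant $\lambda_0 \,\in\, \mathbb{C}$.

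Next I would rule out the possibility $\lambda_0 \,\neq\, 0$ by means of the $\mathbb{C}^*$--action \eqref{extended_action}. For $t \,=\, 1/\lambda_0$, the action of $t$ is a holomorphic automorphism of $\Mlambda^{\stab}(X)$ that fixes the underlying bundle $E$ of each point. Consequently the map $E \,\longmapsto\, t \cdot s(E)$ is again a holomorphic section of $\pr_E$, but now its image lies in $\pr_{\lambda}^{-1}(1) \,=\, \Mconn(X)$, and hence in $\Mconn^{\stab}(X)$ because the underlying bundle is unchanged. This contradicts Proposition \ref{no_connection}. Therefore $\lambda_0 \,=\, 0$.

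It remains to analyse the case $\lambda_0 \,=\, 0$. Then $s$ takes values in
\begin{equation*}
  \pr_{\lambda}^{-1}(0) \cap \Mlambda^{\stab}(X) \,=\, \MHiggs^{\stab}(X)\, .
\end{equation*}
Under the canonical isomorphism \eqref{higgsiso} over $\Mbun^{\stab}$, identifying $\MHiggs^{\stab}(X)$ with $T^* \Mbun^{\stab}$, the section $s$ corresponds to a holomorphic section of the cotangent bundle $T^* \Mbun^{\stab} \,\longrightarrow\, \Mbun^{\stab}$. By Lemma \ref{no_oneforms} this section vanishes identically, so $s$ coincides with the restriction of $\iota$ from \eqref{iota}.

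The main obstacle I anticipate is the reduction step for $\lambda_0 \,\neq\, 0$: one must verify that rescaling by $1/\lambda_0$ genuinely carries the section into $\Mconn^{\stab}(X)$, that is, that dividing a $\lambda_0$--connection by $\lambda_0$ produces an honest holomorphic connection while preserving both the underlying bundle and its stability, so that Proposition \ref{no_connection} applies. Granting this, the two cases $\lambda_0 \,=\, 0$ and $\lambda_0 \,\neq\, 0$ follow immediately from Lemma \ref{no_oneforms} and Proposition \ref{no_connection} respectively.
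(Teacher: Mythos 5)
Your proposal is correct and follows essentially the same route as the paper: show $\pr_\lambda\circ s$ is constant, use the $\mathbb{C}^*$--action \eqref{extended_action} to normalize a nonzero constant to $1$ and invoke Proposition \ref{no_connection}, and in the remaining case $\lambda_0=0$ identify $s$ with a section of $T^*\Mbun^{\stab}$ killed by Lemma \ref{no_oneforms}. The only difference is that you spell out details the paper leaves implicit (the Hartogs extension to the normal projective variety $\Mbun$ to get constancy, and the verification that rescaling preserves the underlying bundle and its stability), and these details are correct.
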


\begin{proof}
  The composition
  \begin{equation*}
    \Mbun^{\stab} \,\stackrel{s}{\longrightarrow}\,
    \Mlambda^{\stab}(X) 
    \,\stackrel{\pr_{\lambda}}{\longrightarrow}\, 
    \mathbb{C}\, ,
  \end{equation*}
where $\pr_{\lambda}$ is the projection in \eqref{pr_lambda},
  is a holomorphic function on $\Mbun^{\stab}$,
  and hence it is a constant function.
Up to the $\mathbb{C}^*$ action
  in \eqref{extended_action}, we may assume that
  this constant is either $0$ or it is $1$.

  If this constant were $1$, then $s$ would
  factor through $\pr_{\lambda}^{-1}( 1) \,=\,
  \Mconn^{\stab}( X)$, which would contradict
  Proposition \ref{no_connection}.

  Hence this constant is $0$, and $s$ factors
  through $\pr_{\lambda}^{-1}(0) \,=\, \MHiggs^{\stab}(X)$.
  Thus $s$ corresponds, under the isomorphism
  \eqref{higgsiso}, to a holomorphic global
  section of the vector bundle
  $T^* \Mbun^{\stab}$. But any such section
  vanishes due to Lemma \ref{no_oneforms};
  this means that $s$ is indeed the restriction
  of the canonical embedding $\iota$ in \eqref{iota}.
\end{proof}

\begin{corollary} \label{cor:lambda}
As in \eqref{hsl}, let $\Mlambda(X)^{\smooth}$ be the
smooth locus of $\Mlambda(X)$. 
The restriction of the holomorphic tangent bundle
  \begin{equation*}
    T \Mlambda(X)^{\smooth} \,\longrightarrow\,
    \Mlambda(X)^{\smooth}
  \end{equation*}
  to $\iota( \Mbun^{\stab}) \,\subset \,\Mlambda(X)^{\smooth}$
  does not admit any nonzero holomorphic section.
\end{corollary}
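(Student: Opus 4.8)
The plan is to exploit the fact that the forgetful map $\pr_E\colon \Mlambda^{\stab}(X)\to\Mbun^{\stab}$ is in fact a holomorphic vector bundle, with $\iota$ as its zero section, and then to read off the restricted tangent bundle from the associated tangent sequence. Indeed, for a fixed stable bundle $E$ the set of all pairs $(\lambda,\nabla)$ with $\nabla$ a $\lambda$--connection on $E$ is a complex vector space under
\begin{equation*}
(\lambda_1,\nabla_1)+(\lambda_2,\nabla_2)=(\lambda_1+\lambda_2,\,\nabla_1+\nabla_2),\qquad c\cdot(\lambda,\nabla)=(c\lambda,\,c\nabla),
\end{equation*}
its additive identity being $\iota(E)=(0,E,0)$; projecting to $\lambda$ exhibits this fiber as an extension of $\mathbb{C}$ by the space of trace--free Higgs fields $T_E^*\Mbun^{\stab}$. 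These structures vary holomorphically (indeed algebraically) with $E$, so $\pr_E$ is a holomorphic vector bundle over $\Mbun^{\stab}$; I will call it $W$.

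First I would write down the tangent sequence of $W$ along its zero section. Since $\iota(\Mbun^{\stab})$ lies in the smooth locus of $\Mlambda(X)$, the differential of $\pr_E$ yields a short exact sequence of holomorphic vector bundles on $\iota(\Mbun^{\stab})\cong\Mbun^{\stab}$,
\begin{equation*}
0\longrightarrow W\longrightarrow T\Mlambda(X)^{\smooth}\big|_{\iota(\Mbun^{\stab})}\stackrel{d\pr_E}{\longrightarrow} T\Mbun^{\stab}\longrightarrow 0,
\end{equation*}
in which the kernel is the vertical tangent bundle; along the zero section this vertical bundle is canonically identified with $W$ itself, because the tangent space to a vector space at its origin is that same vector space.

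It then remains to take global sections. The left--hand term contributes nothing: a nonzero holomorphic section of $W$ is precisely a holomorphic section of $\pr_E$ different from the zero section $\iota$, and Corollary \ref{no_lambdaconnection} rules this out, so $H^0(\Mbun^{\stab},W)=0$. The right--hand term also contributes nothing, since $H^0(\Mbun^{\stab},T\Mbun^{\stab})=0$ by Lemma \ref{no_vectorfields}. By left exactness of $H^0$ applied to the displayed sequence, I conclude that $T\Mlambda(X)^{\smooth}|_{\iota(\Mbun^{\stab})}$ has no nonzero holomorphic section, as desired.

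The main obstacle is the structural input of the first paragraph rather than the cohomological bookkeeping: one must check carefully that $\pr_E$ really is a vector bundle with $\iota$ as zero section (in particular that the fiberwise addition is holomorphic in $E$), and that the vertical subbundle of $T\Mlambda(X)^{\smooth}|_{\iota(\Mbun^{\stab})}$ is the kernel of $d\pr_E$ and is isomorphic to $W$. Once this identification is secured, the two vanishing results quoted above close the argument immediately; note that, pleasantly, no splitting of the tangent sequence is needed.
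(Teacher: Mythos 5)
Your argument is correct and is essentially the paper's own proof: the paper likewise reduces to Lemma \ref{no_vectorfields} plus the vanishing of sections of the normal bundle $\mathcal{N}$ of $\iota$, and identifies $\mathcal{N}$ with $\Mlambda^{\stab}(X)$ as a space over $\Mbun^{\stab}$ (via $(\lambda,E,\nabla)\mapsto \frac{d}{dt}\big|_{t=0}(t\lambda,E,t\nabla)$) so that Corollary \ref{no_lambdaconnection} gives $H^0(\mathcal{N})=0$. The only cosmetic difference is that you work with the vertical subbundle $\ker(d\pr_E)$ in the kernel position of the tangent sequence rather than with the normal bundle in the quotient position; since $\iota$ is a section of $\pr_E$ these are canonically isomorphic, and both versions close by left exactness of $H^0$.
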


\begin{proof}
  We denote the holomorphic normal
  bundle of the restricted embedding
  \begin{equation*}
    \iota\,:\, \Mbun^{\stab} \,\hookrightarrow\,
    \Mlambda(X)^{\smooth}  
  \end{equation*}
  by $\mathcal{N}$. Due to Lemma
  \ref{no_vectorfields}, it suffices
  to show that this vector bundle
  $\mathcal{N}$ over $\Mbun^{\stab}$
  has no nonzero holomorphic sections.

 One has a canonical isomorphism
  \begin{equation}\label{ci}
    \Mlambda^{\stab}(X) \,\stackrel{\sim}{\longrightarrow}\,
    \mathcal{N}
  \end{equation}
  of varieties over $\Mbun^{\stab}$, defined by sending any
  $(\lambda, E, \nabla)$ to the derivative at $t=0$
  of the map
  \begin{equation*}
  {\mathbb C}\, \longrightarrow\, \Mlambda(X)\, , \qquad
  t \longmapsto (t \cdot \lambda\, , E \, ,t \cdot \nabla)\, .
  \end{equation*}
  Using this isomorphism, from Corollary \ref{no_lambdaconnection}
we conclude that vector bundle $\mathcal{N}$ over $\Mbun^{\stab}$
 does not have any nonzero holomorphic sections. This completes
the proof.
\end{proof}

\begin{corollary} \label{torelli:lambda}
  The isomorphism class of the complex analytic space
  $\Mlambda(X)$ determines uniquely the isomorphism
  class of the Riemann surface $X$.
\end{corollary}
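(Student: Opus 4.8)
The plan is to mirror the proof of Corollary \ref{torelli:Higgs}, substituting the $\lambda$--connection inputs established above for their Higgs counterparts. First I would characterize the subvariety $\iota(\Mbun) \subset \Mlambda(X)$ by three properties that are manifestly invariant under any biholomorphism of complex analytic spaces: a closed analytic subset $Z \subseteq \Mlambda(X)$ should be irreducible of complex dimension $(r^2-1)(g-1)$, its smooth locus $Z^{\smooth}$ should be contained in $\Mlambda(X)^{\smooth}$, and the restriction of $T\Mlambda(X)^{\smooth}$ to $Z^{\smooth}$ should admit no nonzero holomorphic section. Since $\dim \Mbun = (r^2-1)(g-1)$, since $\iota$ maps $\Mbun^{\stab}$ (the smooth locus of $\Mbun$) into $\Mlambda(X)^{\smooth}$, and since Corollary \ref{cor:lambda} supplies the vanishing of tangent sections along $\iota(\Mbun^{\stab})$, the image $\iota(\Mbun)$ does satisfy all three properties.

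The next step is to show that these properties force $Z = \iota(\Mbun)$. The $\mathbb{C}^*$--action \eqref{extended_action} restricts to an action on the smooth locus and hence produces a holomorphic vector field on $\Mlambda(X)^{\smooth}$. The third defining property of $Z$ says precisely that this vector field vanishes along $Z^{\smooth}$; equivalently, the stabilizer in $\mathbb{C}^*$ of each point of $Z^{\smooth}$ has nontrivial tangent space at $1$, and must therefore be all of $\mathbb{C}^*$. Thus $Z^{\smooth}$, and so its closure $Z$, lies in the fixed-point locus $\Mlambda(X)^{\mathbb{C}^*}$. Being irreducible of the maximal dimension $(r^2-1)(g-1)$, the component $Z$ is pinned down by Proposition \ref{fix_lambda} to be exactly $\iota(\Mbun)$, whence $Z \cong \Mbun$.

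Finally I would deduce the Torelli statement. If $\Phi \colon \Mlambda(X) \stackrel{\sim}{\longrightarrow} \Mlambda(Y)$ is a biholomorphism, then $\Phi$ carries $\Mlambda(X)^{\smooth}$ onto $\Mlambda(Y)^{\smooth}$ and preserves the three intrinsic properties, so it sends the unique such subvariety $\iota(\Mbun)$ for $X$ onto the corresponding one for $Y$. This yields an isomorphism $\Mbun \cong \calM_{r, {\mathcal O}_Y}$ of the moduli spaces of semistable $\text{SL}(r,\mathbb{C})$--bundles, and therefore $X \cong Y$ by the theorem of Kouvidakis and Pantev \cite[page 229, Theorem E]{KP}.

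I expect no serious obstacle here, since every piece of hard analytic input has already been isolated in Corollary \ref{cor:lambda} and Proposition \ref{fix_lambda}; indeed the $\lambda$--connection case is if anything cleaner than the Higgs case, because a point $(\lambda, E, \nabla)$ can be $\mathbb{C}^*$--fixed only when $\lambda = 0$, so the fixed locus lands in $\MHiggs(X)$ automatically. The one point genuinely requiring care is the verification that the three characterizing properties are intrinsic to the complex analytic space $\Mlambda(X)$ alone, so that a mere biholomorphism transports them; once this is granted the argument is formal.
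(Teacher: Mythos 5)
Your proposal is correct and follows essentially the same route as the paper: characterize $\iota(\Mbun)$ by the three biholomorphism-invariant properties, use the vector field of the $\mathbb{C}^*$--action \eqref{extended_action} to force any such $Z$ into the fixed-point locus, apply Proposition \ref{fix_lambda} to identify $Z$ with $\iota(\Mbun)$, and conclude via Kouvidakis--Pantev. The only (harmless) addition is your explicit final paragraph transporting the characterization through a biholomorphism, which the paper leaves implicit.
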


\begin{proof}
The proof is similar to that of Corollary
\ref{torelli:Higgs}.
Let $Z \,\subset\, \Mlambda(X)$ be a closed analytic
subset satisfying the following three conditions:
\begin{itemize}
 \item $Z$ is irreducible and has
  complex dimension $(r^2-1)(g-1)$.
 \item The smooth locus $Z^{\smooth} \,\subseteq\, Z$
  lies in the smooth locus $\Mlambda(X)^{\smooth} \subset \Mlambda(X)$.
 \item The restriction of the holomorphic tangent
  bundle $T \Mlambda(X)^{\smooth}$ to the subspace
  $Z^{\smooth}$ has no nonzero holomorphic sections.
\end{itemize}
{}From Corollary \ref{cor:lambda} we know that
$\iota( \Mbun)$ satisfies all these conditions.

Consider the vector field on $\Mlambda(X)^{\smooth}$ given by the
action of $\mathbb{C}^*$
on $\Mlambda(X)$ in \eqref{extended_action}. From the
third condition on $Z$ we know that this vector field vanishes
on $Z^{\smooth}$. This implies that the fixed point locus
$\Mlambda(X)^{\mathbb{C}^*}$ contains $Z^{\smooth}$, and hence
also contains its closure $Z$. Therefore, using Proposition
\ref{fix_lambda} it follows that $Z\, =\, \iota( \Mbun)$; in
particular, $Z$ is isomorphic to $\Mbun$. Finally
the isomorphism class of $X$ is recovered from the isomorphism
class of $\Mbun$ using \cite[page 229, Theorem E]{KP}.
\end{proof}

\section{The Deligne--Hitchin moduli space}
\label{sec:MDH}

We recall Deligne's construction \cite{De} of
the Deligne--Hitchin moduli space $\MDH(X)$, as
described in \cite[page 7]{Si1}.

Let $X_{\mathbb{R}}$ be the $C^\infty$ real
manifold of dimension two underlying $X$. Fix a point
$x_0\, \in\, X_{\mathbb{R}}$. Let
$$
\Mrep(X_{\mathbb{R}})\, :=\, \text{Hom}(\pi_1(X_{\mathbb{R}}, x_0),
\text{SL}(r,{\mathbb C}))/\!\!/ \text{SL}(r,{\mathbb C})\,
$$
denote the moduli space of representations
$\rho: \pi_1(X_{\mathbb{R}},x_0) \,\longrightarrow\,
\text{SL}(r,{\mathbb C})$; the group $\text{SL}(r,{\mathbb C})$
acts on $\text{Hom}(\pi_1(X_{\mathbb{R}}, x_0),
\text{SL}(r,{\mathbb C}))$ through the adjoint action of
$\text{SL}(r,{\mathbb C})$ on itself.
Since the fundamental groups for
different base points are identified up to an inner automorphism,
the space $\Mrep(X_{\mathbb{R}})$ is independent of the
choice of $x_0$. Hence we will omit any reference to $x_0$.

The Riemann--Hilbert correspondence
defines a biholomorphic isomorphism
\begin{equation} \label{RH}
\Mrep(X_{\mathbb{R}})
\, \stackrel{\sim}{\longrightarrow}\,
\Mconn(X)\, .
\end{equation}
It sends a representation $\rho: \pi_1(X_{\mathbb{R}})
\longrightarrow \text{SL}(r,{\mathbb C})$ to the
associated holomorphic $\text{SL}(r,{\mathbb C})$--bundle
$E_{\rho}^X$ over $X$, endowed with the induced
connection $\nabla_{\rho}^X$. The inverse of \eqref{RH}
sends a connection to its monodromy representation,
which makes sense because any holomorphic connection
on a Riemann surface is automatically flat.

Given $\lambda \in \mathbb{C}^*$, we can
similarly associate to a representation
$$\rho\,:\, \pi_1(X_{\mathbb{R}}) \,\longrightarrow\,
\text{SL}(r,{\mathbb C})$$ the $\lambda$--connection
$( E_{\rho}^X\, , \lambda \cdot \nabla_{\rho}^X)$.
This defines a holomorphic open embedding
\begin{equation}\label{BD}
  \mathbb{C}^* \times \Mrep(X_{\mathbb{R}})
 \, \longrightarrow\, \Mlambda(X)
\end{equation}
onto the open locus $\pr_{\lambda}^{-1}(
\mathbb{C}^*) \,\subset\, \Mlambda(X)$ of all
triples $(\lambda\, , E\, , \nabla)$
with $\lambda \,\neq\, 0$.

Let $J_X$ denote the almost complex structure of
the Riemann surface $X$. Then $-J_X$ is also an
almost complex structure on $X_{\mathbb{R}}$;
the Riemann surface defined by $-J_X$ will be denoted
by $\overline{X}$.

We can also consider the
moduli space $\Mlambda(\overline{X})$ of
$\lambda$--connections on $\overline{X}$, etcetera.

Now one defines the Deligne--Hitchin moduli space
\begin{equation*}
  \MDH(X) \,:=\, \Mlambda(X) \cup \Mlambda(\overline{X})
\end{equation*}
by glueing $\Mlambda(\overline{X})$
to $\Mlambda(X)$, along the image of
$\mathbb{C}^* \times \Mrep(X_{\mathbb{R}})$
for the map in \eqref{BD}.
More precisely, one identifies, for each
$\lambda \in \mathbb{C}^*$ and each
representation $\rho \in \Mrep(X_{\mathbb{R}})$,
the two points
\begin{equation*}
  (\lambda\, , E_{\rho}^X\, , \lambda \cdot \nabla_{\rho}^X)
  \,\in\, \Mlambda(X) \qquad\text{and}\qquad
  (\lambda^{-1}\, , E_{\rho}^{\overline{X}}\, , \lambda^{-1}
  \cdot \nabla_{\rho}^{\overline{X}})
  \, \in\,  \Mlambda(\overline{X})\, .
\end{equation*}
This identification yields a complex analytic space $\MDH(X)$ of
dimension $2(r^2-1)(g-1)+1$. This analytic space does not
possess a natural algebraic structure since the Riemann--Hilbert
correspondence \eqref{RH} is holomorphic and not algebraic.

The forgetful map $\pr_{\lambda}$ in \eqref{pr_lambda}
extends to a natural holomorphic morphism
\begin{equation}\label{dpr}
\pr\, :\,  \MDH(X) \,\longrightarrow\,{\mathbb C}
{\mathbb P}^1\,=\, {\mathbb C}\cup\{\infty\}
\end{equation}
whose fiber over $\lambda \in {\mathbb C}
{\mathbb P}^1$ is canonically biholomorphic to
\begin{itemize}
 \item the moduli space $\MHiggs(X)$ of
  $\text{SL}(r, {\mathbb C})$ Higgs bundles
  on $X$ if $\lambda = 0$,
 \item the moduli space $\MHiggs(\overline{X})$
  of $\text{SL}(r, {\mathbb C})$ Higgs
  bundles on $\overline{X}$ if $\lambda = \infty$, 
 \item the moduli space $\Mrep(X_{\mathbb{R}})$
  of equivalence classes of representations
$$
\text{Hom}(\pi_1(X_{\mathbb{R}}, x_0),
\text{SL}(r,{\mathbb C}))/\!\!/ \text{SL}(r,{\mathbb C})
$$
  if $\lambda \, \neq\,  0\, , \infty$.
\end{itemize}

Now we are in a position to prove the main result.

\begin{theorem}\label{thm1}
The isomorphism class of the complex analytic space
$\MDH(X)$ determines uniquely the isomorphism
class of the unordered pair of Riemann
surfaces $\{X\, ,\overline{X}\}$.
\end{theorem}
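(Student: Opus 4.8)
The plan is to single out, purely in terms of the complex analytic structure of $\MDH(X)$, a distinguished finite family of closed analytic subsets, and then to identify this family with the two copies of $\iota(\Mbun)$ sitting in the two Higgs fibers over $0$ and over $\infty$. Following the template of Corollaries \ref{torelli:Higgs} and \ref{torelli:lambda}, I would call a closed analytic subset $Z \subseteq \MDH(X)$ \emph{admissible} if: (i) $Z$ is irreducible of complex dimension $(r^2-1)(g-1)$; (ii) its smooth locus $Z^{\smooth}$ is contained in the smooth locus $\MDH(X)^{\smooth}$; and (iii) the restriction of $T\MDH(X)^{\smooth}$ to $Z^{\smooth}$ admits no nonzero holomorphic section. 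All three conditions are manifestly invariant under biholomorphisms, so the set of admissible subsets is an invariant of $\MDH(X)$ as a complex analytic space.

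First I would produce the two expected admissible subsets. Since $\Mlambda(X)$ and $\Mlambda(\overline{X})$ are open in $\MDH(X)$, the smooth locus and the restricted tangent bundle along $\iota(\Mbun^{\stab})$ computed inside $\Mlambda(X)$ agree with those computed inside $\MDH(X)$; hence $\iota(\Mbun) \subseteq \MHiggs(X) = \pr^{-1}(0)$ is admissible by Corollary \ref{cor:lambda} applied to $X$. Applying the same corollary to $\overline{X}$, which satisfies the identical hypotheses on $g$ and $r$, shows that $\iota(\Mbunbar) \subseteq \MHiggs(\overline{X}) = \pr^{-1}(\infty)$ is admissible as well. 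These two subsets are distinct because they lie over the distinct points $0$ and $\infty$ of $\CP^1$.

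The core of the argument is to prove that these are the only admissible subsets, and here the essential new ingredient is a global $\mathbb{C}^*$-action on $\MDH(X)$. A direct computation on the overlap $\mathbb{C}^* \times \Mrep(X_{\mathbb{R}})$ shows that the action \eqref{extended_action} on $\Mlambda(X)$ with parameter $t$, together with the analogous action on $\Mlambda(\overline{X})$ reparametrized by $t \mapsto t^{-1}$, are compatible with the identification \eqref{BD} used in the gluing; they therefore glue to a single holomorphic $\mathbb{C}^*$-action on $\MDH(X)$ covering the action $\lambda \mapsto t\lambda$ on $\CP^1$ via $\pr$. Since the only fixed points of the latter are $0$ and $\infty$, the fixed-point locus of $\MDH(X)$ equals $\MHiggs(X)^{\mathbb{C}^*} \sqcup \MHiggs(\overline{X})^{\mathbb{C}^*}$. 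Now for any admissible $Z$, condition (iii) forces the holomorphic vector field generated by this action to vanish on $Z^{\smooth}$, so $Z^{\smooth}$ lies in the fixed-point locus; as $Z$ is irreducible, $Z^{\smooth}$ is connected, whence $\pr(Z^{\smooth})$ is a connected subset of the two-point discrete set $\{0,\infty\}$ and thus a single point. Consequently $Z$ is contained entirely in $\MHiggs(X)$ or entirely in $\MHiggs(\overline{X})$, so $Z \subseteq \MHiggs(X)^{\mathbb{C}^*}$ or $Z \subseteq \MHiggs(\overline{X})^{\mathbb{C}^*}$; Proposition \ref{fix_higgs}, applied to $X$ or to $\overline{X}$, then forces $Z = \iota(\Mbun)$ or $Z = \iota(\Mbunbar)$ respectively.

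Finally I would assemble the conclusion. Any biholomorphism $\MDH(X) \stackrel{\sim}{\longrightarrow} \MDH(Y)$ matches complex dimensions, so $Y$ has genus $g$, and it carries the two-element family of admissible subsets of $\MDH(X)$ bijectively onto that of $\MDH(Y)$. By the previous step this yields an isomorphism of unordered pairs $\{\Mbun, \Mbunbar\} \cong \{\calM_{r,\mathcal{O}_Y}, \calM_{r,\mathcal{O}_{\overline{Y}}}\}$ of moduli spaces of bundles; applying the Kouvidakis--Pantev theorem \cite{KP} to each member recovers $\{X,\overline{X}\} \cong \{Y,\overline{Y}\}$, that is, $Y \cong X$ or $Y \cong \overline{X}$. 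I expect the main obstacle to be the careful verification that the two $\mathbb{C}^*$-actions glue across the Riemann--Hilbert identification \eqref{BD} and that the resulting global fixed locus is exactly the disjoint union of the two Higgs fixed loci; once this action is available, the uniqueness of admissible subsets reduces cleanly to the already-established Proposition \ref{fix_higgs}.
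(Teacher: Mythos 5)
Your proposal is correct and follows essentially the same route as the paper: the same three "admissibility" conditions on $Z$, the same gluing of the $\mathbb{C}^*$-actions on $\Mlambda(X)$ and $\Mlambda(\overline{X})$ via $t\mapsto t^{-1}$ over the Riemann--Hilbert overlap, the same vanishing-vector-field argument forcing $Z$ into the fixed locus, and the same appeal to Kouvidakis--Pantev at the end. The only cosmetic difference is that you reduce directly to Proposition \ref{fix_higgs} after localizing $Z$ over $0$ or $\infty$, whereas the paper invokes Proposition \ref{fix_lambda}, which is itself just that reduction.
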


\begin{proof}
  We denote by $\MDH(X)^{\smooth} \,\subset\, \MDH(X)$
  the smooth locus, and by
  \begin{equation*}
    T \MDH(X)^{\smooth} \,\longrightarrow \,\MDH(X)^{\smooth}
  \end{equation*}
  its holomorphic tangent bundle. Since $\Mlambda(X)$
  is open in $\MDH(X)$, Corollary \ref{cor:lambda} implies
  that the restriction of $T \MDH(X)^{\smooth}$ to 
  \begin{equation} \label{subvar1}
    \iota( \Mbun^{\stab}) \,\subset\, \Mlambda(X)^{\smooth}
    \,\subset\, \MDH(X)^{\smooth}
  \end{equation}
  does not admit any nonzero holomorphic section.
  The same argument applies if we replace $X$ by
 $\overline{X}$. Since $\Mlambda(\overline{X})$
  is also open in $\MDH(X)$, the restriction of
  $T \MDH(X)^{\smooth}$ to 
  \begin{equation} \label{subvar2}
    \iota( \Mbunbar^{\stab})
    \,\subset\, \Mlambda(\overline{X})^{\smooth}
    \,\subset \,\MDH(X)^{\smooth}
  \end{equation}
  does not admit any nonzero holomorphic section
  either. Here $\Mbunbar$ is the moduli space of
  holomorphic $\text{SL}(r,{\mathbb C})$--bundles
  $E$ on $\overline{X}$, and $\iota$ denotes, as
  in \eqref{e3} and in \eqref{iota}, the
  canonical embedding of $\Mbunbar$ into $\MHiggs(\overline{X})
\,\subset\, \Mlambda(\overline{X})$ defined by
$E\, \longmapsto\, (E,0)$.

The rest of the proof is similar to that of Corollary
\ref{torelli:Higgs}.
We will extend the $\mathbb{C}^*$ action on $\Mlambda(X)$ in
 \eqref{extended_action} to $\MDH(X)$. First consider
the action of $\mathbb{C}^*$ on $\Mlambda(\overline{X})$
defined as in \eqref{extended_action} by substituting
$\overline{X}$ in place of $X$. Note that the action of
any $t\, \in\, \mathbb{C}^*$ on the open subset
$\mathbb{C}^* \times \Mrep(X_{\mathbb{R}})
 \, \longrightarrow\, \Mlambda(X)$ in \eqref{BD} coincides
with the action of $1/t$ on
$\mathbb{C}^* \times \Mrep(X_{\mathbb{R}})
 \, \longrightarrow\, \Mlambda(\overline{X})$. Therefore,
we get an action of $\mathbb{C}^*$ on $\MDH(X)$.
Let
\begin{equation}\label{eta}
\eta\, :\, \MDH(X)^{\smooth}\,\longrightarrow \,
T \MDH(X)^{\smooth}
\end{equation}
be the holomorphic vector field defined by this action
of $\mathbb{C}^*$.

Let $Z \,\subset\, \MDH(X)$ be a closed analytic
subset with the following three properties:
\begin{itemize}
 \item $Z$ is irreducible and has
  complex dimension $(r^2-1)(g-1)$.
 \item The smooth locus $Z^{\smooth} \,\subseteq\, Z$
  lies in the smooth locus $\MDH(X)^{\smooth} \subset \MDH(X)$.
 \item The restriction of the holomorphic tangent
  bundle $T \MDH(X)^{\smooth}$ to the subspace
  $Z^{\smooth}$ has no nonzero holomorphic sections.
\end{itemize}
We noted above that both $\iota( \Mbun)$ and
$\iota( \Mbunbar)$ (see \eqref{subvar1} and
\eqref{subvar2}) satisfy these conditions.

The third condition
on $Z$ implies that the vector field $\eta$ in \eqref{eta}
vanishes on $Z^{\smooth}$. It follows that the fixed point locus
$\MDH(X)^{\mathbb{C}^*}$ contains $Z^{\smooth}$, and hence
also contains its closure $Z$. Therefore, using
Proposition \ref{fix_lambda} we conclude that $Z$ is
one of $\iota( \Mbun)$ and $\iota( \Mbunbar)$.
Using \cite[page 229, Theorem E]{KP} we now know that
the isomorphism class of the analytic space
$\MDH( X)$ determines the isomorphism class of the
unordered pair of
Riemann surfaces $\{X\, ,\overline{X}\}$. This completes
the proof of the theorem.
\end{proof}

\section*{Acknowledgements}
The first and second authors were supported by
the grant MTM2007-63582 of the Spanish
Ministerio de Educaci\'on y Ciencia.
The second author was also supported by
the grant 200650M066 of
Comunidad Aut\'onoma de Madrid.
The third author was supported by the
SFB/TR 45 `Periods, moduli spaces and
arithmetic of algebraic varieties'. The fourth autor was
supported by the grant SFRH/BPD/27039/2006 of the Funda\c{c}\~{a}o
para a Ci\^{e}ncia e a Tecnologia and CMUP, financed by F.C.T. (Portugal) through the programmes POCTI and POSI, with national and European Community structural funds. 


\end{document}